\documentclass[12pt,letterpaper,reqno]{amsart}
\PassOptionsToPackage{dvipsnames}{xcolor}
\usepackage{tikz}
\usetikzlibrary{positioning, shapes.geometric, arrows.meta, calc, backgrounds}
\usepackage{amssymb}
\usepackage{amsmath}
\usepackage{amsthm}
\usepackage{amsfonts}
\IfFileExists{bbm.sty}{\usepackage{bbm}}{}
\usepackage{enumitem}
\usepackage{pgfplots}
\pgfplotsset{compat=1.18}
\usepackage{booktabs,tabularx,array}
\usepackage{graphicx}
\usepackage[T1]{fontenc}
\usepackage{doi}
\usepackage{float}

\usepackage[dvipsnames]{xcolor}
\usepackage{hyperref}
\usepackage{bookmark}
\hypersetup{hypertexnames=false,
colorlinks=true,
linkcolor=RoyalBlue,
citecolor=ForestGreen!65!white,
urlcolor=BrickRed}
\allowdisplaybreaks

\newtheorem{thm}{Theorem}[section]
\newtheorem{lem}[thm]{Lemma}
\newtheorem{prop}[thm]{Proposition}
\newtheorem{cor}[thm]{Corollary}

\newtheorem{conj}[thm]{Conjecture}
\theoremstyle{definition}

\numberwithin{equation}{section}

\newcommand{\dd}{\,\mathrm d}
\newcommand{\e}{\mathrm e}
\newcommand{\Ree}{\operatorname{Re}}
\newcommand{\code}[1]{\texttt{\detokenize{#1}}}
\newcommand{\repoURL}{https://github.com/zhangteng2000/quadratic-tang-zhang-conjecture}
\newcommand{\releaseURL}{https://github.com/zhangteng2000/quadratic-tang-zhang-conjecture/releases/tag/v1.0}
\makeatother

\begin{document}
\title[The quadratic Tang--Zhang inequality]
{Beyond Sendov's conjecture: the quadratic Tang--Zhang inequality}
\author[T.~Zhang]{Teng Zhang}
\address{School of Mathematics and Statistics, Xi'an Jiaotong University, Xi'an 710049, P. R. China}
\email{teng.zhang@stu.xjtu.edu.cn}
\subjclass[2020]{Primary 30C10; Secondary 26D10, 30C15}
\keywords{Sendov's conjecture; polynomial zeros; critical points; inequalities; computer-assisted proof}
\date{September 16, 2026}
\begin{abstract} Very recently, Lech Mazur proved the celebrated Sendov conjecture, and Terence Tao subsequently distilled the main ideas of the proof in a blog post. In this paper, we establish a quantitative strengthening of Sendov's conjecture, namely the quadratic Tang--Zhang inequality. Let $p$ be a polynomial of degree $n\ge2$ whose zeros lie in the closed unit disk, and let $\zeta_1,\ldots,\zeta_{n-1}$ denote its critical points, counted with multiplicity. We prove that, for every zero $a$ of $p$,
$$
\sum_{j=1}^{n-1}\frac{1}{|a-\zeta_j|^2}\ge n-1.
$$
Moreover, equality holds if and only if
$p(z)=c(z^n-\omega)$
for some $c\in\mathbb C\setminus\{0\}$ and $|\omega|=1$. We also provide a Lean 4 formalization of the main results.
\end{abstract}
\maketitle
\enlargethispage{2pt}

\section{Introduction}\label{sec:intro}

In 1958, Sendov formulated the following
conjecture, now known as \emph{Sendov's conjecture}; see his account in \cite[p.~284]{Sen02}. It was subsequently
included in Hayman's influential 1967 monograph
\emph{Research Problems in Function Theory}~\cite[Problem~4.5]{Hay67}.
    \begin{conj}[Sendov]\label{conj:Sendov}
Let $p$ be a polynomial of degree $n\ge2$ with all its zeros in the closed unit disk. For every zero $a$ of $p$, there is a critical point $\zeta$ such that
\begin{equation*}
    |a-\zeta|\le 1.
\end{equation*}
\end{conj}

Over the past several decades, this beautiful conjecture has attracted considerable attention. The conjecture was gradually verified in low degrees: Rubinstein verified the conjecture for $n=3,4$ \cite{Rub68}, Meir and Sharma for $n\le5$ \cite{MS69}, Brown for $n=6$ \cite{Bro91}, Borcea for $n\le7$ \cite{Bor96}, and Brown and Xiang for $n\le8$ \cite{BX99}. For polynomials of sufficiently large degree, Tao \cite{Tao22} proved that Sendov's conjecture holds, and an explicit degree threshold was subsequently given by the author in \cite{Zha26a}.

Regarding the position of the distinguished zero, the case $a=0$ is an immediate consequence of the Gauss--Lucas theorem, whereas Rubinstein  \cite{Rub68} proved the conjecture when $|a|=1$. Further results for zeros on the unit circle were obtained by Goodman, Rahman, and Ratti \cite{GRR69}, and independently by Joyal \cite{Joy69}.

Several other partial results are worth mentioning. Bojanov, Rahman, and Szynal  \cite{BRS85} obtained an asymptotically sharp enlargement of the disk. Miller \cite{Mil93} and V\^aj\^aitu--Zaharescu \cite{VZ93} proved the conjecture when the distinguished zero is sufficiently close to the unit circle. Chijiwa  \cite{Chi11} later made this near-boundary result quantitative, and Kasmalkar \cite{Kas14} subsequently enlarged the range covered by Chijiwa's theorem. At the other end, Bojanov \cite{Boj11} proved the conjecture when $|a|\le 1/(n-1)$. For a zero of fixed modulus $0<|a|<1$, D\'egot  \cite{Deg14} established the conjecture for all sufficiently large degrees, and Chalebgwa  \cite{Cha20} later gave an explicit degree bound.

More recently, Mazur  \cite{Maz26} gave a computer-assisted proof of Sendov's conjecture. Tao \cite{Tao26} then gave a streamlined account of the proof, emphasizing the underlying polynomial identities, discussing the strict form of Sendov's conjecture, and explaining how the argument also yields the strengthening proposed by Phelps and Rodriguez \cite{PR72}.

Motivated by Sendov's conjecture, Tang and the author
\cite[Conjecture~1.10]{TZ25}
(see also \cite[Conjecture~19]{Tao26})
proposed the following family of inequalities strengthening
Sendov's conjecture, with Sendov's conjecture itself appearing
as the weakest limiting case as $\lambda\to\infty$.

\begin{conj}\label{conj:tz}
Let $p$ be a polynomial of degree $n\ge2$ with all its zeros
in the closed unit disk, and let
$\zeta_1,\ldots,\zeta_{n-1}$ be its critical points,
counted with multiplicity.
For every zero $a$ of $p$ and every $\lambda\ge1$,
\begin{equation*}
 \sum_{j=1}^{n-1}\frac{1}{|a-\zeta_j|^\lambda}\ge n-1,
\end{equation*}
where a term with zero denominator in a
reciprocal-distance sum is interpreted as $+\infty$.
\end{conj}

The endpoint $\lambda=1$ is the strongest assertion in Conjecture~\ref{conj:tz}, by the monotonicity of power means. We establish the quadratic case, including all equality cases.

\begin{thm}\label{thm:main}
Let $p$ be a polynomial of degree $n\ge2$ with all its zeros  in the closed unit disk, and let $\zeta_1,\ldots,\zeta_{n-1}$ be its critical points, counted with multiplicity. At every zero $a$ of $p$,
\begin{equation}\label{eq:main}
 \sum_{j=1}^{n-1}\frac{1}{|a-\zeta_j|^2}\ge n-1.
\end{equation}
Equality holds if and only if $p(z)=c(z^n-\omega)$ for some $c\in\mathbb C\setminus\{0\}$ and $\omega\in\mathbb C$ with $|\omega|=1$.
\end{thm}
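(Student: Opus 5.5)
We would begin by normalizing. By rotation we may assume $a\in[0,1]$, and we write $p(z)=(z-a)q(z)$ with $q$ of degree $n-1$ having zeros $w_1,\dots,w_{n-1}$ in the closed unit disk. The critical points $\zeta_j$ are the zeros of $p'$. The natural object is the polynomial $p'(z+a)$ in the shifted variable, whose roots are $\zeta_j-a$. The quantity $\sum_j |a-\zeta_j|^{-2}$ is not a symmetric function of the roots, because it involves moduli. We would therefore bound it from below by a symmetric one. The elementary inequality $\sum_j |u_j|^{-2}\ge \sum_j \operatorname{Re}(u_j^{-2})$ is valid with $u_j=a-\zeta_j$. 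Its right side equals $\operatorname{Re}\big(s_1^2-2s_2\big)$, where $s_1=\sum_j 1/(a-\zeta_j)$ and $s_2=\sum_{i<j}1/((a-\zeta_i)(a-\zeta_j))$. These are ratios of Taylor coefficients of $p'$ at $a$: with $p(z)=\sum_k c_k(z-a)^k$ and $c_0=0$, we have $s_1=-2c_2/c_1$ and $s_2=3c_3/c_1$. So the first step is to express $\sum_j (a-\zeta_j)^{-2}=4c_2^2/c_1^2-6c_3/c_1$ in terms of $q$: $c_1=q(a)$, $c_2=q'(a)$, $c_3=q''(a)/2$. Writing $\sigma_1=\sum_k 1/(a-w_k)$ and $\sigma_2=\sum_k (a-w_k)^{-2}$, we get $q'/q=\sigma_1$ and $q''/q=\sigma_1^2-\sigma_2$. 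This yields
\[
\sum_{j=1}^{n-1}\frac{1}{(a-\zeta_j)^2}=\sigma_1^2+3\sigma_2 .
\]

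The second step is to show $\operatorname{Re}(\sigma_1^2+3\sigma_2)\ge n-1$ whenever all $w_k$ lie in the closed disk and $a\in[0,1]$. When this fails, we would instead use the sharper pointwise comparison $|u|^{-2}\ge \operatorname{Re}(u^{-2})+2(\operatorname{Im}u^{-1})^2$, or a convex combination of $|u|^{-2}$ with $\operatorname{Re}u^{-2}$. We test the sufficiency of the plain version on the extremal example $q(z)=(z^n-1)/(z-1)$ with $a=1$. There the $\zeta_j$ are $0$ with multiplicity $n-1$, so every term equals $1$ and equality holds. Hence the real-part bound is tight there, which is encouraging. The difficulty is that $\operatorname{Re}(\sigma_1^2)$ can be negative. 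The plan is to set $v_k=1/(a-w_k)$; each $v_k$ lies in the region $\{\operatorname{Re} v\ge \ldots\}$ given by the image of the unit disk under $w\mapsto 1/(a-w)$. For $a=1$ this image is the half-plane $\operatorname{Re} v\ge 1/2$, and for $a<1$ it is the exterior of a disk. We then need to minimize $\operatorname{Re}\big((\sum v_k)^2+3\sum v_k^2\big)$ over this region. For $a=1$, write $v_k=1/2+x_k+iy_k$ with $x_k\ge0$. The real part becomes $(n-1)^2/4+\ldots-(\sum y_k)^2-3\sum y_k^2+\ldots$. This is not bounded below under the naive estimate, so the moduli must be retained. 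This is why we expect the $\lambda=2$ argument to need the full $|u|^{-2}$ structure rather than just its real part.

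The main obstacle is therefore the non-holomorphic nature of $|a-\zeta_j|^{-2}$. Our first attempt would be a Mazur/Tao-style polynomial identity. We would write $\sum_j|a-\zeta_j|^{-2}-(n-1)$ as a sum of manifestly nonnegative terms plus a term controlled by $1-|w_k|^2\ge0$ and $1-a^2\ge0$. This would use the conjugate polynomial $\overline{p'}$ and the resultant-free representation $\sum_j|a-\zeta_j|^{-2}=\frac{1}{2\pi}\int$ of $|p''/p'|$-type kernels. Failing that, we would reduce, via a variational argument (moving zeros toward the boundary increases nothing), to the case where all $w_k$ lie on the unit circle. We would then treat the resulting finite-dimensional inequality with interval arithmetic for small $n$ and an asymptotic expansion for large $n$, as in the computer-assisted proofs cited above. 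Equality tracing at the end should force all $\zeta_j=0$ and $|a|=1$. From $p'=nc z^{n-1}$ we then obtain $p=c(z^n-\omega)$ with $|\omega|=1$.
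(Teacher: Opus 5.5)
Your proposal does not contain a proof. The gap is the entire interior case $0<a<1$.

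\textbf{The second-moment route fails.} Your symmetric-function computation is correct, but the target $\operatorname{Re}(\sigma_1^2+3\sigma_2)\ge n-1$ is false, not merely hard. Take $p(z)=(z-1)(z-w)^{n-1}$ with $|w|=1$ and $\frac1{1-w}=\frac12+iy$. The reciprocals $q_j=1/(1-\zeta_j)$ are then $\frac12+iy$ ($n-2$ times) and $n(\frac12+iy)$. Hence $\operatorname{Re}\sum_jq_j^2=(n-1)(n+2)(\frac14-y^2)\to-\infty$ as $w\to1$, while $\sum_j|q_j|^2\to+\infty$.

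Your proposed repair is not an inequality. For $v=u^{-1}$ one has $|u|^{-2}=\operatorname{Re}(u^{-2})+2(\operatorname{Im}u^{-1})^2$ identically. So it only rewrites $\sum_j|q_j|^2=\operatorname{Re}\sum_jq_j^2+2\sum_j(\operatorname{Im}q_j)^2$ and returns the non-holomorphic part untouched. Convex combinations with $|u|^{-2}$ still contain it.

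\textbf{The fallbacks are not arguments.} No identity is written down, and the ``$|p''/p'|$-type kernel'' is unspecified. The reduction to zeros on the circle rests on an unproved monotonicity of $\sum_j|a-\zeta_j|^{-2}$ under outward motion of the zeros. Even granting that reduction, each degree leaves an $n$-parameter family. So ``interval arithmetic for small $n$, asymptotics for large $n$'' gives no finite procedure for the infinitely many intermediate degrees. The theorem implies Sendov's conjecture: if every $|a-\zeta_j|>1$, the sum is $<n-1$. A mechanism of at least that strength is therefore needed, and none is supplied. The equality analysis, in particular strictness for $|a|<1$, is likewise unsupported.

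\textbf{The missing idea.} The paper controls the moduli through products and AM--GM rather than through symmetric functions.
\begin{itemize}
\item Tao's polar and origin identities express $\int_0^1\prod_j(a+(1-a^2)tq_j)\,dt$ and $n\int_0^1\prod_j(1-atq_j)\,dt$ through $\prod_j\frac{1-az_j}{a-z_j}$ and $J=\prod_jz_jq_j$. Here $z_j$ are your $w_j$.
\item The hypothesis $|z_j|\le1$ enters only via $\bigl|\frac{1-az_j}{a-z_j}\bigr|\ge1$ and $|J|\le\prod_j|q_j|$.
\item AM--GM bounds each absolute product by a power of the mean of $|\cdot|^2$. That mean is an explicit quadratic in $t$ involving only $x=\operatorname{Re}\frac1{n-1}\sum_jq_j$ and $s=\frac1{n-1}\sum_j|q_j|^2$.
\item Under the contradiction hypothesis $s\le1$, these bounds are sharpened by a centered interpolation that retains the variance $s-|x+iy|^2$. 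Everything then reduces to scalar inequalities in $n$, $a$ and a few auxiliary parameters.
\item These are excluded analytically for $n>10^6$, by a certified cover by finitely many $(n,a)$-rectangles for $6\le n\le10^6$, and by the polar inequality alone for $n\le5$.
\end{itemize}

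\textbf{What your setup does handle.} Your own framework settles the endpoints and the equality case, via the first moment rather than the second-moment real part.
\begin{itemize}
\item At $a=1$: since $\sum_jq_j=2\sigma_1$ and $\operatorname{Re}\frac1{1-w_k}\ge\frac12$ for $|w_k|\le1$, you get $\frac1{n-1}\sum_j\operatorname{Re}q_j\ge1$. Cauchy--Schwarz then gives $s\ge1$, with equality only if every $q_j=1$, i.e.\ every $\zeta_j=0$. This is the Meir--Sharma inequality the paper invokes.
\item At $a=0$: $n\prod_j|\zeta_j|=\prod_k|w_k|\le1$ together with AM--GM gives $s\ge n^{2/(n-1)}>1$.
\end{itemize}
This first-moment argument works only at $a=1$. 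For $a<1$ the image of the disk under $w\mapsto1/(a-w)$ is the exterior of a disk, and $\operatorname{Re}\sigma_1$ has no sign.
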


By the monotonicity of power means, we have
\begin{cor}\label{cor:powers}
Conjecture~\ref{conj:tz} holds for every $\lambda\ge2$. For each such $\lambda$, equality holds if and only if $p(z)=c(z^n-\omega)$ with $c\ne0$ and $|\omega|=1$.
\end{cor}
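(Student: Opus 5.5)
The statement to prove is Corollary~\ref{cor:powers}. The plan is to deduce it directly from Theorem~\ref{thm:main} by the power-mean inequality, and to read off the equality cases from the two equality conditions involved.

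\textbf{Degenerate case.} Fix $\lambda\ge2$, a polynomial $p$ as in the statement, and a zero $a$ of $p$. If some $\zeta_j=a$, the sum in Conjecture~\ref{conj:tz} is $+\infty$ and strictly exceeds $n-1$. Such a $p$ is also never of the form $c(z^n-\omega)$ with $|\omega|=1$, since that polynomial has simple zeros and its only critical point $0$ is not a zero. So the equality statement holds trivially here, and we may assume $d_j:=|a-\zeta_j|>0$ for all $j$.

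\textbf{Main inequality.} Set $x_j:=d_j^{-2}>0$ and $r:=\lambda/2\ge1$. The map $t\mapsto t^r$ is convex on $(0,\infty)$, so Jensen's inequality (equivalently, monotonicity of power means $M_r\ge M_1$) gives
\begin{equation*}
\frac1{n-1}\sum_{j=1}^{n-1} d_j^{-\lambda}=\frac1{n-1}\sum_j x_j^{r}\ge\Bigl(\frac1{n-1}\sum_j x_j\Bigr)^{r}\ge 1^{r}=1 .
\end{equation*}
The last inequality is exactly \eqref{eq:main}. Multiplying by $n-1$ yields the claimed bound.

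\textbf{Equality.} Equality in the final bound forces equality in both steps. Equality in the second step means equality in \eqref{eq:main}, so Theorem~\ref{thm:main} gives $p=c(z^n-\omega)$. This is already a necessary condition, whether or not $r>1$ (for $r=1$ the first step is an identity).

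Conversely, suppose $p=c(z^n-\omega)$ with $c\ne0$ and $|\omega|=1$. Then all $n-1$ critical points are $0$, and every zero $a$ has $|a|=1$. Hence every $d_j=1$ and the sum equals $n-1$ exactly.

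There is no real obstacle here. The only points needing care are the $+\infty$ convention and the observation that the equality characterization follows from Theorem~\ref{thm:main} alone, without needing strict convexity.
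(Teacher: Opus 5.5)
Your proposal is correct and follows essentially the same route as the paper. Applying Jensen to $t\mapsto t^{\lambda/2}$ with $x_j=|a-\zeta_j|^{-2}$ is the same as comparing the $\lambda$-power mean of $|a-\zeta_j|^{-1}$ with its quadratic mean. Equality then forces equality in \eqref{eq:main}, so Theorem~\ref{thm:main} gives the family, and the converse and the $+\infty$ convention are handled just as in the paper.
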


A Lean~4 formalization of Theorem~\ref{thm:main}, including its equality classification, and Corollary~\ref{cor:powers} is available in~\cite{Zha26b}.

\medskip
\noindent\textbf{Sketch of the proof.}
We briefly describe the proof of Theorem~\ref{thm:main}.
After a rotation, we may assume that the prescribed zero is
$a\in[0,1]$.  For a simple zero $a$, we introduce Tao's reciprocal
coordinates \cite{Tao26}
\[
q_j=\frac{1}{a-\zeta_j},
\qquad
s=\frac{1}{n-1}\sum_{j=1}^{n-1}|q_j|^2.
\]
Thus the desired inequality is equivalent to $s\ge1$, and for
$0<a<1$ we argue by contradiction under the assumption $s\le1$.

The proof combines two communication identities.  The polar identity
yields a second-moment inequality and, in particular, quantitative
control of the endpoint parameter $\beta(1)$.  The origin identity is
then refined by interpolating between the configuration
$q_j=x+iy$ and the actual configuration.  This centered interpolation
retains the variance
\[
v=s-|x+iy|^2
\]
and leads to two explicit scalar inequalities involving only
$n$, $a$, and a small number of auxiliary parameters.

These scalar inequalities are handled in three regimes.
For sufficiently large degree, elementary analytic estimates give a
contradiction directly.  For $6\le n\le10^6$, the remaining parameter
range is covered by finitely many rectangles.  On each rectangle,
monotonicity and convexity reduce the required estimates to exact
rational inequalities with directed rounding.  A finite certificate
of $6593$ rectangles covers the entire range, and two separately
implemented checkers verify the certificate.  Finally, the degrees
$2\le n\le5$ and the endpoint cases $a=0,1$ are treated directly.
The endpoint analysis also determines the equality case
$p(z)=c(z^n-\omega)$, with $c\ne0$ and $|\omega|=1$.

\medskip
\noindent\textbf{Organization of the paper.}
In Section~\ref{sec:notation}, we carry out the normalization and
introduce Tao's reciprocal coordinates together with the second-moment
notation used throughout the proof.  In Section~\ref{sec:identities},
we establish the communication identities.  In Section~\ref{sec:polar},
we develop the polar channel and derive the bounds for $\beta(1)$ and
the endpoint gain.  In Section~\ref{sec:centered}, we introduce the
centered interpolation argument and reduce the problem to two scalar
inequalities.  In Section~\ref{sec:large}, we exclude all sufficiently
large degrees by explicit analytic estimates.  In Section~\ref{sec:finite},
we treat the remaining degrees $6\le n\le10^6$ by a finite exact
verification.  In Section~\ref{sec:completion}, we handle the small
degrees and endpoint cases and complete the proof of the equality
statement.  Finally, Appendix~\ref{app:audit} records the numerical
comparisons, the certificate format, and the details needed to
reproduce the computation.

\medskip
\noindent\textbf{Acknowledgments and AI tools disclosure. } The author is grateful to Terence Tao for very helpful discussions in the comments on his \href{https://terrytao.wordpress.com/2026/08/12/a-digestion-of-the-proof-of-sendovs-conjecture/}{blog post}.

Teng Zhang is supported by the China Scholarship Council, the Young Elite Scientists Sponsorship Program for PhD Students of the China Association for Science and Technology, and the Fundamental Research Funds for the Central Universities at Xi'an Jiaotong University (Grant No.~xzy022024045).

ChatGPT was used for English-language editing, proofreading, grammatical corrections, the preparation of illustrative figures and as an exploratory tool for discussing possible approaches to selected parts of this paper, including, for example, the treatment of the degrees $6\le n\le10^6$. The mathematical arguments and proofs in the original manuscript were independently developed, checked, and written by the author.

The Lean formalization in~\cite{Zha26b} was generated using OpenAI's Codex.
\section{Reduction and notation}\label{sec:notation}

We first normalize the problem and introduce the notation that will be used throughout the proof. Multiplication of $p$ by a nonzero constant does not change its zeros or critical points, while a rotation preserves all relevant distances. We may therefore assume that the prescribed zero is a real number $a\in[0,1].$

Write $p(z)=(z-a)\prod_{j=1}^{n-1}(z-z_j)$, where $|z_j|\le1.$
If $a$ is also a critical point of $p$, then one of the terms in $\sum_{j=1}^{n-1}|a-\zeta_j|^{-2}$
is infinite, and there is nothing to prove. We may therefore assume that $a$ is a simple zero. Following Tao \cite{Tao26}, we parametrize the critical points by
$$
\zeta_j=a-\frac1{q_j},
\qquad q_j\ne0,
\qquad 1\le j\le n-1.
$$
Thus
$$
q_j=\frac1{a-\zeta_j}, \qquad 1\le j\le n-1,
$$
and hence
$$
\frac1{n-1}\sum_{j=1}^{n-1}|a-\zeta_j|^{-2}
=
\frac1{n-1}\sum_{j=1}^{n-1}|q_j|^2.
$$
Consequently, Theorem~\ref{thm:main} is equivalent to proving that
$$
s:=\frac1{n-1}\sum_{j=1}^{n-1}|q_j|^2\ge1.
$$

For $0<a<1$, we shall prove the strict inequality $s>1$. Accordingly, throughout the main part of the argument we work under the counterexample assumption
$
s\le1.
$
The endpoint cases $a=0$ and $a=1$ will be treated separately in Section~\ref{sec:completion}. Thus, until that section, we assume
$
0<a<1.
$

\medskip
\noindent\textbf{Notation.}
We shall use the following notation throughout the argument. For the quantities already appearing in Tao's exposition \cite{Tao26}, we keep the same notation, namely $a$, $z_j$, $\zeta_j$, $q_j$, $\alpha$, $x$, $y$, $\beta(t)$, $F(t)$, and $J$. The quantities $s$, $w_j$, $v$, and $\beta_s(t)$ are introduced here to encode the additional second-moment information.

\begin{itemize}[leftmargin=2.8em,itemsep=5pt,topsep=5pt]

\item $\alpha:=\frac{n-1}{2}(1-a^2)$.

\item $q_j:=\frac{1}{a-\zeta_j}$, where $q_j\ne0$ for $1\le j\le n-1$.

\item $s:=\frac1{n-1}\sum_{j=1}^{n-1}|q_j|^2$, where $s\le1$.

\item $x+iy:=\frac1{n-1}\sum_{j=1}^{n-1}q_j$, where $x,y\in\mathbb R$. By the Cauchy--Schwarz inequality, $|x+iy|\le\sqrt{s}\le1$.

\item $w_j:=q_j-(x+iy)$ for $1\le j\le n-1$. Then $\sum_{j=1}^{n-1}w_j=0$.

\item $v:=\frac1{n-1}\sum_{j=1}^{n-1}|w_j|^2=s-|x+iy|^2\le1-|x+iy|^2$.

\item $\beta(t):=1-2atx+a^2t^2=1-x^2+(x-at)^2$, where $0\le t\le1$.

\item $\beta_s(t):=1-2atx+a^2st^2
=\frac1{n-1}\sum_{j=1}^{n-1}|1-atq_j|^2$, where $0\le t\le1$. Since $s\le1$, we have $\beta_s(t)\le\beta(t)$.

\item $F(t):=\prod_{j=1}^{n-1}(1-atq_j)$.

\item $J:=\prod_{j=1}^{n-1}z_jq_j$. Since $|z_j|\le1$, the arithmetic--geometric mean inequality gives
$|J|\le\prod_{j=1}^{n-1}|q_j|\le s^{(n-1)/2}\le1$.

\end{itemize}

\section{Communication identities and preliminary estimates}
\label{sec:identities}

\begin{lem}[Communication identities]\label{lem:origin}
\begin{enumerate}[label=\textup{(\roman*)},leftmargin=2.3em,itemsep=0.4em]

\item (First origin identity) One has
\begin{equation}\label{eq:origin}
(-1)^{n-1}J=n\int_0^1F(t)\dd t.
\end{equation}

\item (Polar identity) One has
\begin{equation}\label{eq:polaridentity}
\prod_{j=1}^{n-1}\frac{1-az_j}{a-z_j}
=
\int_0^1\prod_{j=1}^{n-1}
\bigl(a+t(1-a^2)q_j\bigr)\dd t.
\end{equation}

\item (Differential origin identity) If
$R:=a^2\int_0^1t\sum_{j=1}^{n-1}q_j^2 \prod_{k\ne j}(1-atq_k)\dd t$,
then
\begin{equation}\label{eq:derivative}
1=F(1)+(n-1)a(x+iy)\int_0^1F(t)\dd t+R.
\end{equation}

\end{enumerate}
\end{lem}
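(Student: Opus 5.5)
The plan is to derive all three identities from the factorization of $p'$ in Tao's reciprocal coordinates, integrating $p'$ along straight segments for (i) and (ii) and using pure algebra for (iii). Since $a$ is a simple zero, we have $p'(a)=\prod_{j=1}^{n-1}(a-z_j)$. From $p'(z)=n\prod_j(z-\zeta_j)$ and $z-\zeta_j=\bigl(1+(z-a)q_j\bigr)/q_j$ we also have $p'(a)=n\prod_j q_j^{-1}$. Together these give
\[
p'(z)=p'(a)\prod_{j=1}^{n-1}\bigl(1+(z-a)q_j\bigr),
\qquad
p'(a)=\prod_{j=1}^{n-1}(a-z_j)=n\prod_{j=1}^{n-1}q_j^{-1}.
\]
Throughout we work with $0<a<1$, as assumed up to Section~\ref{sec:completion}.

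For (i), write $p(0)=p(0)-p(a)=\int_a^0p'(z)\dd z$ and parametrize by $z=a(1-t)$. Then $z-a=-at$ and $\dd z=-a\dd t$, so
\[
p(0)=-a\,p'(a)\int_0^1F(t)\dd t=-an\prod_j q_j^{-1}\int_0^1F(t)\dd t.
\]
On the other hand, $p(0)=(-a)\prod_j(-z_j)=(-1)^n a\prod_j z_j$. Dividing by $-a\ne0$ and multiplying by $\prod_j q_j$ gives $(-1)^{n-1}J=n\int_0^1F(t)\dd t$.

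For (ii), integrate from $a$ to $1/a$ along $z=a+t(1-a^2)/a$, so that $z-a=t(1-a^2)/a$ and $\dd z=\frac{1-a^2}{a}\dd t$. The left side is
\[
p(1/a)=\frac{1-a^2}{a}\cdot a^{-(n-1)}\prod_j(1-az_j).
\]
The right side is
\[
\frac{1-a^2}{a}\,p'(a)\int_0^1\prod_j\bigl(1+t(1-a^2)q_j/a\bigr)\dd t
=\frac{1-a^2}{a}\,a^{-(n-1)}\prod_j(a-z_j)\int_0^1\prod_j\bigl(a+t(1-a^2)q_j\bigr)\dd t.
\]
Cancelling $\frac{1-a^2}{a}a^{-(n-1)}$, which is nonzero for $0<a<1$, and dividing by $\prod_j(a-z_j)\neq0$ yields \eqref{eq:polaridentity}. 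Here $\prod_j(a-z_j)\neq0$ because $a$ is a simple zero.

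For (iii), no contour is needed; it is an integration of $F'$. We have $1=F(0)=F(1)-\int_0^1F'(t)\dd t$ and $-F'(t)=a\sum_jq_j\prod_{k\ne j}(1-atq_k)$. Splitting $q_j=q_j(1-atq_j)+atq_j^2$ gives
\[
\sum_jq_j\prod_{k\ne j}(1-atq_k)=(n-1)(x+iy)F(t)+at\sum_jq_j^2\prod_{k\ne j}(1-atq_k).
\]
Integrating over $[0,1]$ then gives exactly $(n-1)a(x+iy)\int_0^1F+R$, which is \eqref{eq:derivative}.

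The only delicate points are bookkeeping of signs and powers of $a$ in (i) and (ii), and the nonvanishing of $a$, $1-a^2$ and $p'(a)$. The last is guaranteed by the reduction to simple zeros with $0<a<1$. If the identities are needed at $a\in\{0,1\}$, they extend by polynomial continuity in $a$.
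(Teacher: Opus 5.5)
Your proof is correct. Part (iii) is the paper's argument: differentiate $F$, split $q_j=q_j(1-atq_j)+atq_j^2$ so that the first pieces reassemble into $(n-1)(x+iy)F(t)$, and integrate.

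For (i) and (ii) the paper gives no argument and simply cites Tao's Lemma~6. Your derivation makes the lemma self-contained. You use the factorization $p'(z)=p'(a)\prod_j\bigl(1+(z-a)q_j\bigr)$ with $p'(a)=\prod_j(a-z_j)=n\prod_jq_j^{-1}$, then apply the fundamental theorem of calculus on the segments from $a$ to $0$ and from $a$ to $1/a$. Your signs and powers of $a$ check out, and the nonvanishing requirements $a\neq0$, $1-a^2\neq0$, $p'(a)\neq0$ are exactly what the standing assumptions provide.

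The closing remark about extending to $a\in\{0,1\}$ by continuity is unnecessary. The lemma is only invoked for $0<a<1$, and the endpoint cases in Lemma~\ref{lem:endpoints} are handled without it.
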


\begin{proof}
For (i), this is exactly \cite[Lemma~6, Eq.~(9)]{Tao26}.

For (ii), this is the polar identity in
\cite[Lemma~6, Eq.~(8)]{Tao26}.

For (iii), differentiate the product defining $F(t)$. Using the elementary identity
$$
 \prod_{k\ne j}(1-atq_k)
 =F(t)+atq_j\prod_{k\ne j}(1-atq_k),
$$
we obtain
$$
 F'(t)=-(n-1)a(x+iy)F(t)-a^2t\sum_jq_j^2\prod_{k\ne j}(1-atq_k).
$$
Integrating over $[0,1]$ gives \eqref{eq:derivative}.
\end{proof}

We shall repeatedly encounter products with one factor omitted. For $n\ge3$, any nonnegative numbers $b_1,\ldots,b_{n-1}$ satisfy
\begin{equation}\label{eq:deleted}
\prod_{k\ne j}b_k
\le\left(\frac{\sum_kb_k^2}{n-2}\right)^{(n-2)/2}
\le\frac53\left(\frac1{n-1}\sum_kb_k^2\right)^{(n-2)/2}.
\end{equation}
Indeed,
$$
 \left(\frac{n-1}{n-2}\right)^{(n-2)/2}\le\sqrt\e<\frac53.
$$
It follows that the remainder term in Lemma~\ref{lem:origin} admits the following direct estimates.

\begin{lem}[Direct estimates]\label{lem:rawerrors}
Let $n\ge3$, and let $R$ be defined as in Lemma~\ref{lem:origin}. Then
\begin{equation*}
|F(1)|\le \beta_s(1)^{(n-1)/2},\qquad
|R|\le\frac53a^2(n-1)\int_0^1t\beta_s(t)^{(n-2)/2}\dd t.
\end{equation*}
\end{lem}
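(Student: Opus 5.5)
The plan is to reduce both bounds to the AM--GM inequality combined with the second-moment identity $\beta_s(t)=\frac1{n-1}\sum_j|1-atq_j|^2$ from Section~\ref{sec:notation}. For $F(1)$, apply AM--GM to the $n-1$ nonnegative numbers $|1-aq_j|^2$:
$$
|F(1)|^2=\prod_{j=1}^{n-1}|1-aq_j|^2\le\Bigl(\frac1{n-1}\sum_{j=1}^{n-1}|1-aq_j|^2\Bigr)^{n-1}=\beta_s(1)^{n-1},
$$
and take square roots. Before using it, I will check the identity for $\beta_s$: expanding $|1-atq_j|^2=1-2at\Ree q_j+a^2t^2|q_j|^2$ and averaging gives $1-2atx+a^2t^2s$, as claimed.

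For $R$, first use the triangle inequality inside the integral:
$$
|R|\le a^2\int_0^1 t\sum_{j=1}^{n-1}|q_j|^2\prod_{k\ne j}|1-atq_k|\dd t .
$$
Then, for each fixed $t$ and $j$, apply \eqref{eq:deleted} (valid because $n\ge3$) with $b_k=|1-atq_k|$:
$$
\prod_{k\ne j}|1-atq_k|\le\frac53\,\beta_s(t)^{(n-2)/2}.
$$
This bound does not depend on $j$, so the sum over $j$ collapses to $\frac53\beta_s(t)^{(n-2)/2}\sum_j|q_j|^2=\frac53(n-1)s\,\beta_s(t)^{(n-2)/2}$.

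Finally, the standing counterexample assumption $s\le1$ removes the factor $s$, which gives the stated bound after integrating against $t\,\dd t$.

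The only point needing care is the middle inequality in \eqref{eq:deleted}. The first inequality there is AM--GM applied to the $n-2$ retained factors after enlarging their sum of squares to the full sum $\sum_k b_k^2$. The second is the numerical fact $((n-1)/(n-2))^{(n-2)/2}\le\sqrt{\e}<5/3$, which the paper already records. I do not expect any genuine obstacle; the one thing to track explicitly is that $s\le1$ is used for $R$ but not for $F(1)$.
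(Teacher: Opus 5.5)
Your proposal is correct and follows the paper's proof: AM--GM on the factors of $F(1)$ using $\beta_s(t)=\frac1{n-1}\sum_j|1-atq_j|^2$, then \eqref{eq:deleted} applied to each deleted product in $R$, followed by $\sum_j|q_j|^2=(n-1)s\le n-1$. You are also right that the standing assumption $s\le1$ is needed only for the bound on $R$.
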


\begin{proof}
By definition, the average of the quantities $|1-atq_j|^2$ is $\beta_s(t)$. The first estimate follows from the arithmetic--geometric mean inequality applied to the factors of $F(1)$. For the second, apply \eqref{eq:deleted} to each product appearing in $R$ and use $\sum_j|q_j|^2=(n-1)s\le n-1$.
\end{proof}

\section{The polar channel}\label{sec:polar}

We begin with the polar channel. The argument follows the one leading to the raw polar inequality in \cite[Proposition~10(i)]{Tao26}, which is derived from the polar communication identity \cite[Lemma~6, (8)]{Tao26}. The only difference is that, rather than replacing the second moment $\frac1{n-1}\sum_{j=1}^{n-1}|q_j|^2$
by $1$, we keep its exact value $s$. This gives the following second-moment version of Tao's raw polar inequality.
\begin{lem}[Second-moment polar inequality]\label{lem:polar}
One has
\begin{equation}\label{eq:polarraw}
1\le\int_0^1
\left(a^2+2a(1-a^2)xt+(1-a^2)^2st^2\right)^{(n-1)/2}\dd t.
\end{equation}
\end{lem}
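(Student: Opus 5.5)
We take absolute values in the polar identity \eqref{eq:polaridentity} of Lemma~\ref{lem:origin}(ii), bounding the left side from below by $1$ and the right side from above by the stated integral.

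\textbf{Step 1: the left side has modulus at least $1$.} For $|z_j|\le1$ and $0<a<1$,
$$
|1-az_j|^2-|a-z_j|^2=(1-a^2)(1-|z_j|^2)\ge0 .
$$
Hence each factor $\frac{1-az_j}{a-z_j}$ has modulus at least $1$. The denominators $a-z_j$ are nonzero because $a$ is a simple zero. Therefore the modulus of the left-hand side of \eqref{eq:polaridentity} is at least $1$.

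\textbf{Step 2: bound the integrand pointwise.} For each $t\in[0,1]$, we apply the AM--GM inequality to the numbers $|a+t(1-a^2)q_j|^2$:
$$
\prod_{j=1}^{n-1}\bigl|a+t(1-a^2)q_j\bigr|
\le\left(\frac1{n-1}\sum_{j=1}^{n-1}\bigl|a+t(1-a^2)q_j\bigr|^2\right)^{(n-1)/2}.
$$
We expand the average using $\frac1{n-1}\sum_j q_j=x+iy$ and $\frac1{n-1}\sum_j|q_j|^2=s$. Since $a$ and $t(1-a^2)$ are real, the cross terms give $2at(1-a^2)\,\Ree(x+iy)=2a(1-a^2)xt$. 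The average is therefore exactly
$$
a^2+2a(1-a^2)xt+(1-a^2)^2st^2 .
$$
It is nonnegative, being an average of squared moduli.

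\textbf{Step 3: combine.} Apply the triangle inequality for integrals to the right side of \eqref{eq:polaridentity}, then use Step~2 pointwise. Together with Step~1 this gives \eqref{eq:polarraw}.

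No step is a real obstacle. The only point needing care is keeping $s$ exact in Step~2 rather than replacing it by $1$, which is precisely the refinement over \cite[Proposition~10(i)]{Tao26}. Note that the argument never uses the counterexample assumption $s\le1$.
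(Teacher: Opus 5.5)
Your proposal is correct and follows the paper's proof essentially step for step. Both arguments bound each factor by $|1-az_j|\ge|a-z_j|$ via $(1-a^2)(1-|z_j|^2)\ge0$, apply the triangle inequality to the polar identity, and then use AM--GM pointwise on $|a+(1-a^2)tq_j|^2$ together with the exact second-moment expansion, which indeed never invokes $s\le1$.
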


\begin{proof}
From the polar identity~\eqref{eq:polaridentity}, we have
\[
 \int_0^1\prod_{j=1}^{n-1}\bigl(a+(1-a^2)tq_j\bigr)\dd t
 =\prod_{j=1}^{n-1}\frac{1-az_j}{a-z_j}.
\]
Since $a$ is a simple zero, $a-z_j\ne0$ for every $j$. Moreover, since $|z_j|\le1$,
\[
 |1-az_j|^2-|a-z_j|^2
 =(1-a^2)(1-|z_j|^2)\ge0,
\]
and hence
\[
 \left|\frac{1-az_j}{a-z_j}\right|\ge1.
\]
It follows from the triangle inequality that
\begin{equation}\label{eq:polar-triangle}
1\le
\int_0^1
\prod_{j=1}^{n-1}
\left|a+(1-a^2)tq_j\right|
\dd t.
\end{equation}

For each fixed $t\in[0,1]$, the arithmetic--geometric mean inequality applied to the nonnegative numbers
$\left|a+(1-a^2)tq_j\right|^2$ gives
\begin{equation}\label{eq:polar-amgm}
\prod_{j=1}^{n-1}
\left|a+(1-a^2)tq_j\right|
\le
\left(
\frac1{n-1}
\sum_{j=1}^{n-1}
\left|a+(1-a^2)tq_j\right|^2
\right)^{(n-1)/2}.
\end{equation}

Using
$x+iy=\frac1{n-1}\sum_{j=1}^{n-1}q_j$
and
$s=\frac1{n-1}\sum_{j=1}^{n-1}|q_j|^2$,
we obtain
\begin{equation}\label{eq:polar-second-moment}
\begin{aligned}
\frac1{n-1}
\sum_{j=1}^{n-1}
\left|a+(1-a^2)tq_j\right|^2
&=
a^2
+2a(1-a^2)t
\Re\left(\frac1{n-1}\sum_{j=1}^{n-1}q_j\right)
+(1-a^2)^2t^2
\frac1{n-1}\sum_{j=1}^{n-1}|q_j|^2\\
&=
a^2+2a(1-a^2)xt+(1-a^2)^2st^2.
\end{aligned}
\end{equation}
Combining \eqref{eq:polar-triangle}, \eqref{eq:polar-amgm}, and
\eqref{eq:polar-second-moment}, we obtain
\[
1\le\int_0^1
 \left(a^2+2a(1-a^2)xt+(1-a^2)^2st^2\right)^{(n-1)/2}\dd t,
\]
which is precisely \eqref{eq:polarraw}.
\end{proof}

We next derive the bounds on Tao's parameter $\beta(1)$ that will be needed below. The argument leading to the logarithmic estimate follows \cite[Proposition~10(ii)--(iii)]{Tao26}. The difference is that our polar inequality retains the exact second moment $s$; nevertheless, the assumption $s\le1$ still leads to the same exponential inequality. For the rational bound, we use a simpler but weaker estimate than Tao's bound $\beta(1)<\alpha/(3+\alpha)$, which is sufficient for our purposes. The strict inequalities below remain valid even when $s=1$.

\begin{lem}[Bounds for $\beta(1)$]\label{lem:beta}
One has
\begin{equation}\label{eq:betabound}
 0<\beta(1)<\frac{\alpha}{1+\alpha}<1,
 \qquad x>\frac a2.
\end{equation}
For $\alpha\ge1$ one also has
\begin{equation}\label{eq:betalog}
 1-\beta(1)>\frac{\log\alpha}{\alpha}.
\end{equation}
\end{lem}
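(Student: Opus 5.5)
The plan is to turn the second-moment polar inequality \eqref{eq:polarraw} into a one-variable inequality for $\gamma:=1-\beta(1)=2ax-a^2$, and then read off all the claimed bounds from it. The positivity $\beta(1)>0$ does not need \eqref{eq:polarraw}. Since $|x|\le|x+iy|\le1$ and $0<a<1$, we get $\beta(1)=1-2ax+a^2\ge(1-a)^2>0$. Also, $x>a/2$ is equivalent to $\beta(1)<1$, i.e.\ $\gamma>0$. So everything reduces to proving $\gamma>1/(1+\alpha)$, and $\gamma>\log\alpha/\alpha$ when $\alpha\ge1$.

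\textbf{Step 1: reduce the integrand.} Write $u:=1-a^2$, so that $\tfrac{n-1}{2}u=\alpha$. Let $Q_s(t)$ denote the integrand base in \eqref{eq:polarraw}. Since $s\le1$, we have $Q_s(t)\le Q(t):=a^2+2auxt+u^2t^2$. The quadratic $Q$ has $Q(0)=1-u$, and a direct computation gives $Q(1)=1+u\gamma$. Because its leading coefficient is $u^2$, interpolation gives
\[
Q(t)=(1-t)Q(0)+tQ(1)-u^2t(1-t)=1+u\bigl((1+\gamma)t-1\bigr)-u^2t(1-t).
\]
Dropping the last term and using $1+z<e^z$ for $z\ne0$ (note $Q\ge0$), we get for $0<t<1$
\[
Q_s(t)^{(n-1)/2}<\exp\bigl(\alpha((1+\gamma)t-1)\bigr).
\]
The inequality is strict, since $u^2t(1-t)>0$ there. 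Integrating and applying \eqref{eq:polarraw}, we obtain the key strict inequality
\[
1<\int_0^1 e^{\alpha((1+\gamma)t-1)}\dd t .
\]
If $\gamma\le-1$, the exponent is negative on $(0,1)$, so the right-hand side is at most $1$; this is a contradiction. Hence $\gamma>-1$, and evaluating the integral gives
\[
g(\gamma):=e^{\alpha\gamma}-e^{-\alpha}-\alpha(1+\gamma)>0 .
\]

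\textbf{Step 2: analyze $g$.} We have $g'(\gamma)=\alpha(e^{\alpha\gamma}-1)$, which is negative for $\gamma<0$ and positive for $\gamma>0$. The endpoint values are $g(-1)=0$ and $g(0)=1-e^{-\alpha}-\alpha<0$. Therefore $g<0$ on $(-1,0]$, and $g>0$ forces $\gamma>0$.

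For the rational bound, $g$ is increasing on $[0,\infty)$. It therefore suffices to check that $g(1/(1+\alpha))\le0$, i.e.
\[
e^{\alpha/(1+\alpha)}\le e^{-\alpha}+\frac{\alpha(2+\alpha)}{1+\alpha}\qquad(\alpha>0).
\]
I expect this elementary calculus inequality to be the main technical point. Both sides equal $1$ at $\alpha=0$. I would compare derivatives, or substitute $r=\alpha/(1+\alpha)\in(0,1)$ and use $e^{r}\le 1+r+r^2$-type bounds together with $e^{-\alpha}\ge1-\alpha$. If the crude bounds are too lossy for small $\alpha$, I would instead use the retained term $-u^2t(1-t)$ to gain slack. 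Once this is checked, $\gamma>1/(1+\alpha)$, i.e.\ $\beta(1)<\alpha/(1+\alpha)$.

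\textbf{Step 3: the logarithmic bound.} Let $\alpha\ge1$ and suppose $0<\gamma\le\log\alpha/\alpha$. Then $e^{\alpha\gamma}\le\alpha<\alpha(1+\gamma)+e^{-\alpha}$, so $g(\gamma)<0$, a contradiction. This gives \eqref{eq:betalog}.
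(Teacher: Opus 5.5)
\textbf{Gap: the rational bound is never proved.} Your Steps 1 and 3 follow the paper's route. You get the same exponential majorant from $t^2<t$ and $1+u\le\e^u$, the same evaluated inequality (your $g(\gamma)>0$), and the same logarithm step for $\alpha\ge1$. Deriving $\gamma>0$ from the shape of $g$ is a valid substitute for the paper's direct sign argument, and $\beta(1)\ge(1-a)^2>0$ is fine. However, with $r:=\alpha/(1+\alpha)$, the bound $\beta(1)<\alpha/(1+\alpha)$ rests entirely on
\[
 \e^{r}\le \e^{-\alpha}+\frac{\alpha(2+\alpha)}{1+\alpha}=\e^{-\alpha}+\alpha+r,
\]
and you leave this unproved. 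The concrete route you sketch cannot work for any $\alpha>0$, not just small $\alpha$. With $\e^{-\alpha}\ge1-\alpha$, the right-hand side is bounded below by exactly $1+r$, while $\e^r>1+r$, so no upper bound on $\e^r$ can close the comparison. Written as $\e^r-1-r\le\e^{-\alpha}-1+\alpha$, both sides begin with $\alpha^2/2$. The true slack near $0$ is only about $\tfrac23\alpha^3$, so you need a genuinely second-order comparison. Retaining $-u^2t(1-t)$ is unnecessary, since the inequality in $\alpha$ alone is true.

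\textbf{How the paper closes it.} Since $r<\log(1+\alpha)$, i.e.\ $\e^r<1+\alpha$, the Lagrange remainder gives
\[
 \e^r-1-r<\frac{r^2\e^r}{2}<\frac{\alpha^2}{2(1+\alpha)} .
\]
Using $1-\e^{-t}>t/(1+t)\ge t/(1+\alpha)$ on $(0,\alpha]$,
\[
 \alpha-1+\e^{-\alpha}=\int_0^\alpha(1-\e^{-t})\dd t>\int_0^\alpha\frac{t}{1+\alpha}\dd t=\frac{\alpha^2}{2(1+\alpha)} .
\]
Chaining these two bounds, together with monotonicity of $\e^w-1-w$ for $w\ge0$ (the same as your monotonicity of $g$ on $[0,\infty)$), gives the contradiction.

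\textbf{A working version of your idea.} Your "compare derivatives" suggestion also works and is short. Set $h(\alpha):=\e^{-\alpha}+\alpha+r-\e^r$. Then $h(0)=0$, and
\[
 h'(\alpha)=(1-\e^{-\alpha})-\frac{\e^r-1}{(1+\alpha)^2}>\frac{\alpha}{1+\alpha}-\frac{\alpha}{(1+\alpha)^2}=\frac{\alpha^2}{(1+\alpha)^2}>0,
\]
using $\e^\alpha>1+\alpha$ and $\e^r<1+\alpha$. Hence $g(1/(1+\alpha))<0$ for every $\alpha>0$. With either argument inserted, your proof is complete.
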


\begin{proof}
Since $s\le1$, the quadratic inside the integral in \eqref{eq:polarraw} is bounded above by the same expression with $s$ replaced by $1$. For $0<t<1$, the inequality $t^2<t$, together with $2-\beta(1)=2ax+1-a^2$, makes this bound strict. Using $1+u\le\e^u$ and $\alpha=\frac{n-1}{2}(1-a^2)$, we therefore obtain
\begin{equation}\label{eq:exppolar}
 1<\int_0^1\exp\{\alpha[-1+(2-\beta(1))t]\}\dd t.
\end{equation}
Notice that the strict inequality persists when $s=1$, precisely because $t^2<t$ for $0<t<1$.

If $\beta(1)\ge1$, then the exponent in \eqref{eq:exppolar} is nonpositive on $[0,1]$ and is strictly negative on a set of positive measure, contradicting \eqref{eq:exppolar}. Hence $\beta(1)<1$, or equivalently $x>a/2$. On the other hand,
\[
 \beta(1)=1-x^2+(x-a)^2>0,
\]
because $|x|\le1$ and $a<1$.

Evaluating the integral in \eqref{eq:exppolar} gives
\begin{equation}\label{eq:exppolar-evaluated}
 \e^{\alpha(1-\beta(1))}-\e^{-\alpha}
 >\alpha(2-\beta(1)).
\end{equation}
Since $\beta(1)<1$, it follows in particular that $\e^{\alpha(1-\beta(1))}>\alpha$. For $\alpha\ge1$, taking logarithms yields \eqref{eq:betalog}.

It remains to prove the rational bound in \eqref{eq:betabound}. Put $z=\alpha/(1+\alpha)$. Since $z<\log(1+\alpha)$, we have $\e^z<1+\alpha$, and therefore
\[
 \e^z-1-z<\frac{z^2\e^z}{2}
       <\frac{\alpha^2}{2(1+\alpha)}.
\]
Also, for $0<t\le\alpha$ one has $1-\e^{-t}>t/(1+t)\ge t/(1+\alpha)$, and hence
\[
 \alpha-1+\e^{-\alpha}
 =\int_0^\alpha(1-\e^{-t})\dd t
 >\int_0^\alpha\frac{t}{1+\alpha}\dd t
 =\frac{\alpha^2}{2(1+\alpha)}.
\]
Suppose, to the contrary, that $\alpha(1-\beta(1))\le z$. Since $\beta(1)<1$, we have $0<\alpha(1-\beta(1))\le z$, and the function $\e^w-1-w$ is increasing for $w\ge0$. Combining the preceding two estimates therefore gives
\[
 \e^{\alpha(1-\beta(1))}-\e^{-\alpha}
 <\alpha+\alpha(1-\beta(1))
 =\alpha(2-\beta(1)),
\]
contradicting \eqref{eq:exppolar-evaluated}. Thus $\alpha(1-\beta(1))>z$, and hence $1-\beta(1)>1/(1+\alpha)$. This is equivalent to
$\beta(1)<\alpha/(1+\alpha)$ and completes the proof.
\end{proof}

The raw polar inequality contains more information than the exponential form used above. In particular, by retaining the exact second moment $s$, we can keep track of the value of the quadratic integrand at $t=1$. To this end, define
\[
 \rho:=(1-a^2)s-1+2ax
      =1-\beta(1)-(1-a^2)(1-s).
\]
Thus $\rho$ measures the endpoint gain that remains after accounting for the possible deficit $1-s$ in the second moment. This quantity does not appear in Tao's argument, where the second moment is replaced by its upper bound $1$ before the polar inequality is simplified.

\begin{lem}[Endpoint gain]\label{lem:gap}
One has $0<\rho\le1-\beta(1)$ and
\begin{equation}\label{eq:chordpolar}
 \left(1+(1-a^2)\rho\right)^{(n+1)/2}
 -a^{n+1}
 -\frac{n+1}{2}(1-a^2)(1+\rho)\ge0.
\end{equation}
If $
 \frac{n+1}{2}(1-a^2)>1,$
then
\begin{equation}\label{eq:rhobounds}
 \rho\ge
 \frac{\log\!\left(\frac{n+1}{2}(1-a^2)\right)}{\frac{n+1}{2}(1-a^2)},
 \qquad \beta_s(1)\le1-\rho,
 \qquad x^2\ge\rho.
\end{equation}
\end{lem}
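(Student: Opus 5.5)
The plan is to exploit the fact that the integrand in the raw polar inequality of Lemma~\ref{lem:polar}, namely
\[
Q(t):=a^2+2a(1-a^2)xt+(1-a^2)^2st^2,
\]
is a convex quadratic in $t$ (its leading coefficient $(1-a^2)^2s$ is nonnegative). We then replace $Q$ by its chord on $[0,1]$.

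The first step is to compute the endpoint values. We have $Q(0)=a^2$. Expanding $(1-a^2)\rho=(1-a^2)^2s-(1-a^2)+2a(1-a^2)x$ gives
\[
Q(1)=1+(1-a^2)\rho .
\]
By convexity, $0\le Q(t)\le L(t):=(1-t)a^2+t\bigl(1+(1-a^2)\rho\bigr)$ for $t\in[0,1]$.

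Next I show $\rho>0$, arguing by contradiction. If $\rho\le0$, then $L(t)\le(1-t)a^2+t<1$ for $0\le t<1$, because $a<1$. Then $Q^{(n-1)/2}\le L^{(n-1)/2}<1$ on $[0,1)$, so the integral in \eqref{eq:polarraw} is $<1$, contradicting Lemma~\ref{lem:polar}. The bound $\rho\le1-\beta(1)$ is immediate from the definition of $\rho$ and $s\le1$.

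With $\rho>0$ in hand, $L$ is increasing with slope $L(1)-L(0)=(1-a^2)(1+\rho)>0$. The integral of a power of a linear function is explicit:
\[
\int_0^1 L(t)^{(n-1)/2}\dd t=\frac{\bigl(1+(1-a^2)\rho\bigr)^{(n+1)/2}-a^{n+1}}{\frac{n+1}{2}(1-a^2)(1+\rho)}.
\]
Combining this with $1\le\int_0^1Q^{(n-1)/2}\dd t\le\int_0^1L^{(n-1)/2}\dd t$ and clearing the positive denominator gives \eqref{eq:chordpolar}.

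For \eqref{eq:rhobounds}, set $N:=\frac{n+1}{2}(1-a^2)>1$. Using $1+u\le\e^u$ gives $(1+(1-a^2)\rho)^{(n+1)/2}\le\e^{N\rho}$. Then \eqref{eq:chordpolar} yields
\[
\e^{N\rho}\ge a^{n+1}+N(1+\rho)>N,
\]
so $N\rho>\log N$, which is the first bound.

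For the second bound, direct computation gives $1-\rho=2-(1-a^2)s-2ax=\beta_s(1)+(1-s)$, so $\beta_s(1)\le1-\rho$ because $s\le1$.

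For the third bound, $s\le1$ gives $\rho\le(1-a^2)-1+2ax=2ax-a^2=x^2-(x-a)^2\le x^2$.

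None of these steps looks like a real obstacle. The only points needing care are establishing $\rho>0$ \emph{before} dividing by the chord slope, and the strictness in that contradiction. If one prefers to avoid the strict-inequality argument, one can instead note that $\rho\le0$ makes $L\le1$ with $L<1$ on a set of positive measure, which suffices.
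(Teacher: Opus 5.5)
Your proposal is correct and follows essentially the same route as the paper: bound the convex quadratic integrand by its chord on $[0,1]$, rule out $\rho\le0$ by contradiction with the raw polar inequality, integrate the chord power explicitly to get \eqref{eq:chordpolar}, and then use $1+u\le\e^u$ (the paper's $\log(1+t)\le t$) together with the identities $\beta_s(1)=s-\rho$ and $\rho\le 2ax-a^2=x^2-(x-a)^2$. The small additions in your version are harmless: the explicit remark that $Q\ge0$ (needed for the powers), and the strict form $\rho>\log N/N$.
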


\begin{proof}
Set
\[
 H(t)=a^2+2a(1-a^2)xt+(1-a^2)^2st^2.
\]
This is a convex quadratic on $[0,1]$, with
\[
 H(0)=a^2,
 \qquad
 H(1)=1+(1-a^2)\rho.
\]
Hence $H$ lies below the chord joining its endpoint values, namely
\[
 \ell_\rho(t)=a^2+(1-a^2)(1+\rho)t.
\]
Figure~\ref{fig:polar-chord} summarizes the geometric step used here and in the contradiction argument below.

\begin{figure}[H]
\centering
\begin{tikzpicture}[x=1cm,y=1cm,line cap=round,line join=round,>=Latex]
  \begin{scope}[shift={(0,0)}]
    \node[font=\small\bfseries, anchor=west] at (-.10,3.55) {(a) Convex $H$ and its chord};
    \draw[->, black!55, line width=.65pt] (0,0)--(4.65,0) node[below, black!75] {$t$};
    \draw[->, black!55, line width=.65pt] (0,0)--(0,3.25);
    \draw[black!25, dashed] (4.15,0)--(4.15,2.78);
    \draw[black!25, dashed] (0,0.72)--(4.15,0.72);

    \coordinate (A) at (0,0.72);
    \coordinate (B) at (4.15,2.78);
    \draw[RoyalBlue!85!black, line width=1.15pt] (A)--(B);
    \draw[ForestGreen!70!black, line width=1.2pt]
      plot[smooth] coordinates {(0,.72) (.70,.86) (1.45,1.08) (2.20,1.40) (3.00,1.85) (3.55,2.27) (4.15,2.78)};
    \fill[black!70] (A) circle (1.7pt);
    \fill[black!70] (B) circle (1.7pt);
    \node[font=\scriptsize, anchor=east] at (-.06,.72) {$a^2$};
    \node[font=\scriptsize, anchor=west, align=left] at (4.22,2.78) {$1+(1-a^2)\rho$};
    \node[font=\scriptsize, RoyalBlue!80!black, rotate=26] at (2.78,2.22) {$\ell_\rho(t)$};
    \node[font=\scriptsize, ForestGreen!55!black] at (2.47,1.18) {$H(t)$};
    \node[font=\scriptsize, black!65] at (2.16,.42) {$H(t)\le \ell_\rho(t)$};
    \node[font=\scriptsize, black!65] at (4.15,-.24) {$1$};
  \end{scope}

  \begin{scope}[shift={(6.55,0)}]
    \node[font=\small\bfseries, anchor=west] at (-.10,3.55) {(b) Hypothetical $\rho\le0$};
    \draw[->, black!55, line width=.65pt] (0,0)--(4.65,0) node[below, black!75] {$t$};
    \draw[->, black!55, line width=.65pt] (0,0)--(0,3.25);
    \draw[BrickRed!70!black, dashed, line width=.8pt] (0,2.55)--(4.42,2.55)
      node[right, font=\scriptsize, BrickRed!70!black] {$1$};
    \draw[black!25, dashed] (4.15,0)--(4.15,2.40);

    \coordinate (C) at (0,.72);
    \coordinate (D) at (4.15,2.34);
    \draw[RoyalBlue!85!black, line width=1.15pt] (C)--(D);
    \draw[ForestGreen!70!black, line width=1.2pt]
      plot[smooth] coordinates {(0,.72) (.75,.79) (1.48,.92) (2.20,1.15) (2.95,1.48) (3.55,1.86) (4.15,2.34)};
    \fill[black!70] (C) circle (1.7pt);
    \fill[black!70] (D) circle (1.7pt);
    \node[font=\scriptsize, anchor=east] at (-.06,.72) {$a^2$};
    \node[font=\scriptsize, anchor=west] at (4.22,2.34) {$\ell_\rho(1)\le1$};
    \node[font=\scriptsize, RoyalBlue!80!black, rotate=20] at (2.75,1.93) {$\ell_\rho(t)$};
    \node[font=\scriptsize, ForestGreen!55!black] at (2.45,1.02) {$H(t)$};
    \node[font=\scriptsize, black!65, align=center] at (2.15,.40) {$H(t)\le\ell_\rho(t)\le1$};
    \node[font=\scriptsize, black!65] at (4.15,-.24) {$1$};
  \end{scope}
\end{tikzpicture}
\caption{The chord comparison in Lemma~\ref{lem:gap}.  Convexity places $H$ below the affine chord $\ell_\rho$.  If $\rho\le0$, the whole chord is at or below the level $1$, with strict inequality on a set of positive measure; this is incompatible with the raw polar inequality.}
\label{fig:polar-chord}
\end{figure}

We first show that $\rho>0$. If $\rho\le0$, then $\ell_\rho(t)\le1$ for $0\le t\le1$, and the inequality is strict on a set of positive measure since $a<1$. As $H(t)\le\ell_\rho(t)$, this would give
\[
 \int_0^1 H(t)^{(n-1)/2}\dd t<1,
\]
contradicting the raw polar inequality~\eqref{eq:polarraw}. Thus $\rho>0$. From the definition of $\rho$ and the assumption $s\le1$, we also have
\[
 \rho
 =1-\beta(1)-(1-a^2)(1-s)
 \le1-\beta(1).
\]

Since $H(t)\le\ell_\rho(t)$, the raw polar inequality~\eqref{eq:polarraw}  gives
\begin{equation}\label{eq:chord-integral}
 1\le\int_0^1\ell_\rho(t)^{(n-1)/2}\dd t.
\end{equation}
The integral can be evaluated explicitly:
\[
 \int_0^1\ell_\rho(t)^{(n-1)/2}\dd t
 =
 \frac{2}{(n+1)(1-a^2)(1+\rho)}
 \left[
 \left(1+(1-a^2)\rho\right)^{(n+1)/2}
 -a^{n+1}
 \right].
\]
Substituting this into \eqref{eq:chord-integral} gives
\eqref{eq:chordpolar}.

In particular, \eqref{eq:chordpolar} implies
\[
 \left(1+(1-a^2)\rho\right)^{(n+1)/2}
 \ge \frac{n+1}{2}(1-a^2).
\]
Suppose now that $
 \frac{n+1}{2}(1-a^2)>1.$
Taking logarithms and using $\log(1+t)\le t$ gives
\[
 \log\!\left(\frac{n+1}{2}(1-a^2)\right)
 \le
 \frac{n+1}{2}\log\!\left(1+(1-a^2)\rho\right)
 \le
 \frac{n+1}{2}(1-a^2)\rho,
\]
which proves the first estimate in \eqref{eq:rhobounds}. Finally,
\[
 \beta_s(1)=s-\rho\le1-\rho,
\]
while
\[
 \rho\le1-\beta(1)=2ax-a^2=x^2-(x-a)^2\le x^2.
\]
This proves the remaining two estimates.
\end{proof}

\section{The centered origin channel}\label{sec:centered}

The direct estimates in Lemma~\ref{lem:rawerrors} do not make use of the variance information. We now refine them by exploiting the fact that the $q_j$ are concentrated around their mean.

Instead of expanding in elementary symmetric functions, we interpolate between the constant configuration $q_j=x+iy$ and the actual configuration.  The deformation is visualized in Figure~\ref{fig:centered-interpolation}: every point moves radially away from the common mean, while the mean itself stays fixed and the variance scales by $\theta^2$.

\begin{figure}[H]
\centering
\begin{tikzpicture}[x=1.22cm,y=1.22cm,line cap=round,line join=round,>=Latex]
  \coordinate (M) at (0.35,0.18);

  \draw[RoyalBlue!20, line width=.85pt, dashed, rounded corners=9pt]
    (-1.32,1.95) -- (2.53,1.53) -- (1.43,-1.61) -- (-1.24,-1.19) -- cycle;
  \draw[ForestGreen!32, line width=.8pt, dashed, rounded corners=8pt]
    (-.57,1.16) -- (1.55,.92) -- (.94,-.80) -- (-.52,-.58) -- cycle;

  \foreach \x/\y in {2.35/1.38,-1.15/1.78,-1.05/-1.02,1.25/-1.42}{
    \draw[black!25, dashed, line width=.65pt] (M)--(\x,\y);
  }

  \fill[RoyalBlue] (2.35,1.38) circle (2.7pt);
  \fill[RoyalBlue] (-1.15,1.78) circle (2.7pt);
  \fill[RoyalBlue] (-1.05,-1.02) circle (2.7pt);
  \fill[RoyalBlue] (1.25,-1.42) circle (2.7pt);
  \node[RoyalBlue!80!black, font=\scriptsize, anchor=south west] at (2.37,1.40) {$q_1$};
  \node[RoyalBlue!80!black, font=\scriptsize, anchor=south east] at (-1.18,1.80) {$q_2$};
  \node[RoyalBlue!80!black, font=\scriptsize, anchor=north east] at (-1.08,-1.05) {$q_3$};
  \node[RoyalBlue!80!black, font=\scriptsize, anchor=north west] at (1.28,-1.45) {$q_4$};

  \fill[ForestGreen!70!black] (1.45,.84) circle (2.35pt);
  \fill[ForestGreen!70!black] (-.475,1.06) circle (2.35pt);
  \fill[ForestGreen!70!black] (-.42,-.48) circle (2.35pt);
  \fill[ForestGreen!70!black] (.845,-.70) circle (2.35pt);
  \node[ForestGreen!55!black, font=\scriptsize, anchor=south west] at (1.48,.84) {$q_1(\theta)$};

  \draw[->, RoyalBlue!65!black, line width=.72pt] (.82,.46)--(1.16,.64);
  \draw[->, RoyalBlue!65!black, line width=.72pt] (-.01,.60)--(-.25,.84);
  \draw[->, RoyalBlue!65!black, line width=.72pt] (-.01,-.12)--(-.25,-.32);
  \draw[->, RoyalBlue!65!black, line width=.72pt] (.58,-.22)--(.73,-.44);

  \fill[BrickRed] (M) circle (3.2pt);
  \node[BrickRed!85!black, font=\small, anchor=south west] at ($(M)+(.09,.07)$) {$m=x+iy$};
  \node[font=\scriptsize, black!58, anchor=north] at (.48,-1.90) {complex $q$-plane};

  \begin{scope}[shift={(3.08,.40)}]
    \draw[black!16, rounded corners=4pt, fill=black!1] (-.05,-1.48) rectangle (4.12,1.45);
    \node[anchor=west, font=\small] at (.16,1.04) {$q_j(\theta)=m+\theta w_j$};
    \node[anchor=west, font=\small] at (.16,.43)
      {$\displaystyle \frac1{n-1}\sum_j q_j(\theta)=m$};
    \node[anchor=west, font=\small] at (.16,-.25)
      {$\displaystyle \frac1{n-1}\sum_j|q_j(\theta)-m|^2=\theta^2v$};
    \draw[RoyalBlue, line width=1.05pt] (.18,-.91)--(.72,-.91);
    \node[anchor=west, font=\scriptsize] at (.84,-.91) {actual configuration $(\theta=1)$};
    \fill[ForestGreen!70!black] (.30,-1.24) circle (2.1pt);
    \node[anchor=west, font=\scriptsize] at (.42,-1.24) {intermediate configuration};
  \end{scope}
\end{tikzpicture}
\caption{Centered interpolation in reciprocal coordinates.  Writing $m=x+iy$ and $w_j=q_j-m$, the path $q_j(\theta)=m+\theta w_j$ contracts the configuration to its common mean at $\theta=0$ and recovers the original configuration at $\theta=1$.  The mean is fixed, while the variance is exactly $\theta^2v$.}
\label{fig:centered-interpolation}
\end{figure}

\begin{lem}[Centered interpolation estimate]\label{lem:centered}
For $n\ge3$, put $D(t)=1-at(x+iy)$. Then
\begin{equation}\label{eq:centered}
 |F(t)-D(t)^{n-1}|
 \le\frac{5(n-1)a^2t^2v}{6|D(t)|}\,\beta_s(t)^{(n-2)/2}
 \qquad(0\le t\le1).
\end{equation}
\end{lem}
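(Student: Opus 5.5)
The plan is to realize the deformation of Figure~\ref{fig:centered-interpolation} analytically. Fix $t\in[0,1]$, write $m=x+iy$ and $w_j=q_j-m$, and for $\theta\in[0,1]$ set
\[
 b_k(\theta):=1-at\bigl(m+\theta w_k\bigr)=D(t)-at\theta w_k,
 \qquad
 G(\theta):=\prod_{k=1}^{n-1}b_k(\theta).
\]
Then $G(0)=D(t)^{n-1}$ and $G(1)=F(t)$. Hence $F(t)-D(t)^{n-1}=\int_0^1G'(\theta)\dd\theta$, and it suffices to bound $|G'(\theta)|$ pointwise by $\frac53(n-1)a^2t^2\theta v\,\beta_s(t)^{(n-2)/2}/|D(t)|$. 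Integrating $\theta$ over $[0,1]$ then supplies the factor $\tfrac12$ and turns $\tfrac53$ into $\tfrac56$, which is exactly \eqref{eq:centered}.

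First note that $D(t)\neq0$: since $0<a<1$ and $|x|\le1$, we have $\Ree D(t)=1-atx>0$. Differentiating gives
\[
G'(\theta)=-at\sum_j w_j\prod_{k\ne j}b_k(\theta).
\]
The key step is to extract a second factor of $w_j$ using the centering $\sum_j w_j=0$. Because $\sum_j w_j\,b_j\prod_{k\ne j}b_k=G(\theta)\sum_j w_j=0$, we may subtract $\frac1{D}\sum_j w_jb_j\prod_{k\ne j}b_k$ from the sum for free. Using $1-b_j/D=at\theta w_j/D$, this gives
\[
G'(\theta)=-\frac{a^2t^2\theta}{D(t)}\sum_{j=1}^{n-1}w_j^2\prod_{k\ne j}b_k(\theta).
\]
This identity is the heart of the argument. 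It explains both the $1/|D(t)|$ and the quadratic dependence on the spread, namely the factor $v$. It also vanishes at $\theta=0$, which reflects that the first-order term of the expansion is killed by the centering.

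Next we bound the deleted products using \eqref{eq:deleted}, which applies since $n\ge3$:
\[
\Bigl|\prod_{k\ne j}b_k\Bigr|\le\frac53\Bigl(\frac1{n-1}\sum_k|b_k(\theta)|^2\Bigr)^{(n-2)/2}.
\]
Since $\sum_kw_k=0$, the cross terms drop out and
\[
\frac1{n-1}\sum_k|b_k(\theta)|^2=|D(t)|^2+a^2t^2\theta^2v\le|D(t)|^2+a^2t^2v=\beta_s(t).
\]
The final equality is the same variance decomposition applied to $\beta_s(t)=\frac1{n-1}\sum_j|1-atq_j|^2$. Combining this with $\sum_j|w_j|^2=(n-1)v$ yields the claimed pointwise bound on $|G'(\theta)|$, and integrating in $\theta$ finishes the proof.

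The only non-routine step is spotting the subtraction trick that converts $\sum_jw_j\prod_{k\ne j}b_k$ into a sum of $w_j^2$ terms. The alternative of going to $G''$ and using doubly-deleted products would lose a factor and would not produce the single $1/|D|$. Everything else is bookkeeping with \eqref{eq:deleted} and the variance identity.
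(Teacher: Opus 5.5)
Your proposal is correct and follows the paper's proof essentially step for step. The matching ingredients are the interpolation $G_\theta(t)=\prod_j\bigl(D(t)-at\theta w_j\bigr)$, the identity $D(t)\,\partial_\theta G_\theta(t)=-a^2t^2\theta\sum_j w_j^2\prod_{k\ne j}\bigl(D(t)-at\theta w_k\bigr)$ coming from $\sum_j w_j=0$ (your subtraction trick is the paper's rewriting $D=(D-at\theta w_j)+at\theta w_j$), the bound \eqref{eq:deleted} with mean square at most $\beta_s(t)$, and integration in $\theta$; the only cosmetic difference is that you justify $D(t)\ne0$ via $\Ree D(t)=1-atx>0$ rather than via $at|x+iy|<1$.
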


\begin{proof}
Set
\[
 G_\theta(t)=\prod_{j=1}^{n-1}\bigl(D(t)-at\theta w_j\bigr),
 \qquad 0\le\theta\le1.
\]
Thus $G_0(t)=D(t)^{n-1}$ and $G_1(t)=F(t)$. The cancellation $\sum_jw_j=0$ gives the identity
\begin{equation}\label{eq:interpolation}
 D(t)\,\partial_\theta G_\theta(t)
 =-a^2t^2\theta\sum_jw_j^2
          \prod_{k\ne j}\bigl(D(t)-at\theta w_k\bigr).
\end{equation}
Indeed, after differentiating $G_\theta(t)$, replace $D(t)$ in the $j$th summand by
$D(t)-at\theta w_j+at\theta w_j$. The terms containing $G_\theta(t)$ then cancel because $\sum_jw_j=0$.

Moreover,
\[
 \frac1{n-1}\sum_j|(x+iy)+\theta w_j|^2
 =|x+iy|^2+\theta^2v\le s,
 \qquad
 \frac1{n-1}\sum_j((x+iy)+\theta w_j)=x+iy.
\]
It follows that the mean square of the factors $D(t)-at\theta w_j$ is at most $\beta_s(t)$. Applying \eqref{eq:deleted} to each product in \eqref{eq:interpolation}, and using $\sum_j|w_j^2|=(n-1)v$, we obtain
\[
 \left|\partial_\theta G_\theta(t)\right|
 \le
 \frac{5(n-1)a^2t^2\theta v}{3|D(t)|}\,
 \beta_s(t)^{(n-2)/2}.
\]
Finally, $|D(t)|>0$ because $at|x+iy|<1$. Integrating the preceding estimate over $0\le\theta\le1$, and using $G_0(t)=D(t)^{n-1}$ and $G_1(t)=F(t)$, gives \eqref{eq:centered}.
\end{proof}

Define the positive coefficient
\begin{equation}\label{eq:Cdef}
 C_n(a,x):=\frac{5a^3n(n-1)}6
      \int_0^1\frac{t^2\beta(t)^{(n-2)/2}}{1-axt}\dd t.
\end{equation}
The denominator is positive since $ax<1$.

\begin{prop}[Scalar inequalities]\label{prop:scalar}
Let $n\ge3$. Then
\begin{equation}\label{eq:centerscalar}
 1\le a|x+iy|
 +C_n(a,x)|x+iy|\bigl(1-|x+iy|^2\bigr)
 +\beta(1)^{n/2}
\end{equation}
and
\begin{equation}\label{eq:directscalar}
 1\le\beta(1)^{(n-1)/2}+\frac{(n-1)a}{n}
       +\frac53a^2(n-1)\int_0^1t\beta(t)^{(n-2)/2}\dd t.
\end{equation}
\end{prop}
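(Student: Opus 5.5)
Both inequalities will be obtained by taking absolute values in the communication identities of Lemma~\ref{lem:origin}. The common inputs are $|x+iy|\le1$, $|J|\le1$, and $\beta_s(t)\le\beta(t)$ (from $s\le1$). Throughout write $m=x+iy$.

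\textbf{Inequality \eqref{eq:directscalar}.} Start from the differential origin identity \eqref{eq:derivative}:
\[
1=F(1)+(n-1)a\,m\int_0^1F(t)\dd t+R.
\]
By the first origin identity \eqref{eq:origin}, $\int_0^1F\dd t=(-1)^{n-1}J/n$. Since $|J|\le1$ and $|m|\le1$, the middle term has modulus at most $(n-1)a/n$. Lemma~\ref{lem:rawerrors} bounds $|F(1)|$ by $\beta_s(1)^{(n-1)/2}$ and $|R|$ by $\frac53a^2(n-1)\int_0^1t\,\beta_s(t)^{(n-2)/2}\dd t$. Since $0\le\beta_s\le\beta$ and the exponents are nonnegative, we may replace $\beta_s$ by $\beta$. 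The triangle inequality then gives \eqref{eq:directscalar}.

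\textbf{Inequality \eqref{eq:centerscalar}.} Use the first origin identity together with Lemma~\ref{lem:centered}. Write
\[
F(t)=D(t)^{n-1}+E(t),\qquad D(t)=1-atm.
\]
Since $am\neq0$ (because $x>a/2>0$ by Lemma~\ref{lem:beta}), we can integrate explicitly:
\[
\int_0^1D(t)^{n-1}\dd t=\frac{1-D(1)^n}{n\,a\,m}.
\]
Substituting into \eqref{eq:origin} and multiplying by $am$ gives
\[
1=D(1)^n+(-1)^{n-1}a\,m\,J-n\,a\,m\int_0^1E(t)\dd t.
\]

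Now bound each term in absolute value.

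1. For the first term, $|D(1)|^2=1-2ax+a^2|m|^2\le1-2ax+a^2=\beta(1)$. Hence $|D(1)|^n\le\beta(1)^{n/2}$.
2. For the second term, $|amJ|\le a|m|$.
3. For the third term, Lemma~\ref{lem:centered} gives the bound
\[
n\,a|m|\int_0^1\frac{5(n-1)a^2t^2v}{6|D(t)|}\beta_s(t)^{(n-2)/2}\dd t .
\]
Then use three further estimates:
\begin{itemize}
\item $|D(t)|\ge\Ree D(t)=1-axt>0$;
\item $\beta_s\le\beta$;
\item $v=s-|m|^2\le1-|m|^2$.
\end{itemize}
With these, the third term is bounded by exactly $C_n(a,x)\,|m|\,(1-|m|^2)$, by the definition \eqref{eq:Cdef}.

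Summing the three bounds yields \eqref{eq:centerscalar}.

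The only delicate points are bookkeeping: dividing by $am$ requires $m\ne0$, which is secured by $x>a/2$; and the denominator $|D(t)|$ must be replaced by the real-part bound $1-axt$ so that the resulting coefficient depends only on $n$, $a$, $x$. I do not expect a genuine obstacle beyond these checks.
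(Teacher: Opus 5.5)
Your proposal is correct and follows essentially the same route as the paper. For \eqref{eq:centerscalar} you combine the first origin identity with the integrated centered estimate, the bounds $|D(1)|^2\le\beta(1)$ and $|D(t)|\ge1-axt$, and $v\le1-|x+iy|^2$; for \eqref{eq:directscalar} you apply the triangle inequality to the differential origin identity, using $|J|\le1$ and Lemma~\ref{lem:rawerrors} with $\beta_s\le\beta$, exactly as the paper does.
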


\begin{proof}
By \eqref{eq:betabound}, $x+iy\ne0$. Since $D(t)=1-at(x+iy)$, direct integration gives
\begin{equation}\label{eq:Dintegral}
 n\int_0^1D(t)^{n-1}\dd t
 =\frac{1-D(1)^n}{a(x+iy)}.
\end{equation}
Integrating \eqref{eq:centered} over $[0,1]$ and multiplying by $a|x+iy|n$, we obtain
\[
 a|x+iy|n
 \left|
 \int_0^1F(t)\dd t-\int_0^1D(t)^{n-1}\dd t
 \right|
 \le
 \frac{5a^3n(n-1)|x+iy|v}{6}
 \int_0^1
 \frac{t^2\beta_s(t)^{(n-2)/2}}{|D(t)|}\dd t.
\]
Using $\beta_s(t)\le\beta(t)$ and
$|D(t)|\ge1-axt$, and then applying \eqref{eq:Cdef} and \eqref{eq:Dintegral}, this gives
\begin{equation}\label{eq:centered-integrated}
 \left|a(x+iy)n\int_0^1F(t)\dd t
       -\bigl(1-D(1)^n\bigr)\right|
 \le C_n(a,x)|x+iy|v.
\end{equation}
Moreover,
\[
 |D(1)|^2
 =|1-a(x+iy)|^2
 \le\beta(1).
\]
By the first origin identity~\eqref{eq:origin},
\[
 \left|n\int_0^1F(t)\dd t\right|
 =|J|\le1.
\]
Combining this with \eqref{eq:centered-integrated}, and using
$v\le1-|x+iy|^2$ and $|D(1)|^n\le\beta(1)^{n/2}$, yields
\eqref{eq:centerscalar}.

For the second estimate, the differential origin identity~\eqref{eq:derivative} gives
\[
 1\le |F(1)|
 +(n-1)a|x+iy|
 \left|\int_0^1F(t)\dd t\right|
 +|R|.
\]
By Lemma~\ref{lem:rawerrors}, $\beta_s\le\beta$, and $|x+iy|\le1$, we have
\[
 |F(1)|\le\beta(1)^{(n-1)/2},
 \qquad
 |R|\le\frac53a^2(n-1)\int_0^1t\beta(t)^{(n-2)/2}\dd t.
\]
Finally, \eqref{eq:origin} and $|J|\le1$ give
\[
 \left|\int_0^1F(t)\dd t\right|\le\frac1n.
\]
Substituting these three estimates into the preceding inequality gives
\eqref{eq:directscalar}.
\end{proof}

It remains only to carry out a one-dimensional optimization. Write temporarily $r=|x+iy|\in[0,1]$. For $a,C\ge0$,
\begin{equation}\label{eq:cubicmax}
 \Psi(a,C):=\max_{0\le r\le1}\{ar+Cr(1-r^2)\}
 =\begin{cases}
 a,&2C\le a,\\[1mm]
 \displaystyle\frac{2(a+C)^{3/2}}{3\sqrt{3C}},&2C>a.
 \end{cases}
\end{equation}
This is obtained by maximizing the cubic $(a+C)r-Cr^3$ on $[0,1]$. The function $\Psi$ is nondecreasing in each variable. Consequently, any upper bound strictly less than one for either of the right-hand sides in Proposition~\ref{prop:scalar} rules out a counterexample.

\section{Analytic elimination for large degree}\label{sec:large}

We first treat the range of sufficiently large degrees. The threshold is chosen for convenience, with ample room in the elementary estimates below. Throughout this section we assume
\[
 n-1\ge10^6.
\]
Appendix~\ref{app:constants} records the exact rational checks at the endpoint, together with the monotonicity arguments used to extend them to the full range.

\subsection{Zeros near the unit circle}
In this range, the centered estimate alone is sufficient.

\begin{prop}\label{prop:near}
The inequality $s\le1$ is impossible when
$n\ge1000001$ and $0<1-a^2\le1/10$.
\end{prop}

\begin{proof}
We have $a^2\ge9/10$, $a\ge9/10$, and $\alpha\le(n-1)/20$. Put
\[
 \nu:=\frac{n-4}{2}.
\]
We first establish the two tail estimates
\begin{equation}\label{eq:tailpowers}
 \beta(1)^\nu\le(n-1)^{-4},\qquad
 (133/160)^\nu\le(n-1)^{-4}.
\end{equation}
Suppose first that $\alpha\le\sqrt{n-1}$. By \eqref{eq:betabound} and $\nu\ge(n-1)/3$,
\[
 \nu(1-\beta(1))
 >\frac{(n-1)/3}{1+\sqrt{n-1}}
 \ge\frac{\sqrt{n-1}}6>4\log(n-1).
\]
It follows that $\beta(1)^\nu<(n-1)^{-4}$. If instead $\sqrt{n-1}\le\alpha\le(n-1)/20$, then $\nu/\alpha\ge9$, and \eqref{eq:betalog} gives
\[
 \beta(1)^\nu
 \le\exp\{-\nu(1-\beta(1))\}
 <\alpha^{-9}\le(n-1)^{-9/2}.
\]
This proves the first estimate in \eqref{eq:tailpowers}. For the second, we use $133/160=1-27/160$ and obtain
\[
 (133/160)^\nu\le\exp(-27\nu/160)
 \le\exp(-9(n-1)/160)\le(n-1)^{-4}.
\]

We now estimate the coefficient $C_n(a,x)$ in \eqref{eq:Cdef} by splitting the integral at $t=1/4$. For $0\le t\le1/4$, the inequality $x>a/2$ gives
\[
 \beta(t)\le1-\frac{3a^2t}{4}\le\exp(-3a^2t/4),
 \qquad 1-axt\ge\frac34.
\]
Using $\int_0^\infty t^2\e^{-kt}\dd t=2/k^3$, the contribution of this interval to $C_n(a,x)$ is at most
\[
 \frac{2048(5/3)n(n-1)}{81(n-2)^3a^3}
 <\frac{60}{n-1}.
\]
On $[1/4,1]$, the convexity of $\beta$ gives
\[
 \beta(t)\le\max\{\beta(1/4),\beta(1)\}
       \le\max\{133/160,\beta(1)\}.
\]
Furthermore, $\beta(t)\le2(1-axt)$. Hence, by \eqref{eq:tailpowers}, the contribution from this interval is at most
\[
 \frac53a^3n(n-1)
 \bigl\{(133/160)^\nu+\beta(1)^\nu\bigr\}
 <\frac7{(n-1)^2}.
\]
Combining the two estimates, we obtain
\begin{equation}\label{eq:Cnear}
 C_n(a,x)<\frac{61}{n-1}<\frac a2.
\end{equation}

It remains to control the endpoint term in \eqref{eq:centerscalar}. Set $E:=\beta(1)^{n/2}$. If $\alpha\ge1$, then \eqref{eq:tailpowers} and $1-a^2=2\alpha/(n-1)$ imply
\[
 \frac{E}{1-a^2}\le\frac1{2(n-1)^3}<\frac14.
\]
If $0<\alpha\le1$, then \eqref{eq:betabound} gives $\beta(1)\le\min\{\alpha,1/2\}$, and hence
\[
 \frac{E}{1-a^2}\le(n-1)2^{-n/2}<\frac14.
\]
Thus, in both cases, $E<(1-a^2)/4$. Together with \eqref{eq:Cnear}, the formula \eqref{eq:cubicmax} shows that the right-hand side of \eqref{eq:centerscalar} is less than
\[
 a+\frac{1-a^2}{4}<1,
\]
which contradicts \eqref{eq:centerscalar}.
\end{proof}

\subsection{The complementary range}
We now turn to the complementary range, where the differential origin identity is sufficient. The endpoint gain obtained in Lemma~\ref{lem:gap} controls the increasing part of the remainder integral.

\begin{prop}\label{prop:away}
The inequality $s\le1$ is impossible when $n\ge1{,}000{,}001$ and $1-a^2\ge1/10$.
\end{prop}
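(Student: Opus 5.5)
We argue by contradiction from the direct scalar inequality \eqref{eq:directscalar}. The plan is to show that, for $n-1\ge10^6$ and $1-a^2\ge1/10$, its right-hand side is strictly less than $1$. In this range $\alpha=\frac{n-1}{2}(1-a^2)\ge(n-1)/20\ge 5\cdot10^4$, and likewise $\frac{n+1}{2}(1-a^2)>1$. Hence both \eqref{eq:betalog} and \eqref{eq:rhobounds} apply, with
\[
 \rho\ge \frac{\log\bigl(\tfrac{n+1}{2}(1-a^2)\bigr)}{\tfrac{n+1}{2}(1-a^2)}=:\rho_0,
 \qquad x^2\ge\rho,
 \qquad 1-\beta(1)>\frac{\log\alpha}{\alpha}.
\]

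\textbf{The endpoint term.} This is immediate:
\[
 \beta(1)^{(n-1)/2}\le \exp\{-\tfrac{n-1}{2}(1-\beta(1))\}<\alpha^{-1/(1-a^2)}\le\alpha^{-1}.
\]
This is $O(1/n)$.

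\textbf{The middle term.} We have $\frac{(n-1)a}{n}\le a\le\sqrt{9/10}\approx0.9487$. So it suffices to show that the remainder term
\[
 \frac53a^2(n-1)\int_0^1 t\beta(t)^{(n-2)/2}\dd t
\]
is below roughly $0.05$, uniformly, minus the $O(1/n)$ slack used by the endpoint term.

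\textbf{The remainder integral.} Split at the minimum $t_*=x/a$ of the quadratic $\beta(t)=1-x^2+(x-at)^2$, or at $t=1$ if $t_*\ge1$.

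On the decreasing part $[0,\min(t_*,1)]$ use
\[
 \beta(t)\le 1-2axt+a^2t^2\le \exp(-2axt+a^2t^2).
\]
Since $x>a/2$, near $t=0$ the exponent is at most $-c\,a^2 t$. Using $\int_0^\infty t\e^{-kt}\dd t=k^{-2}$ with $k\asymp (n-2)a x$, this part contributes $O\bigl(a^2(n-1)/(n^2a^2x^2)\bigr)=O(1/(n x^2))$. Since $x^2\ge\rho\gtrsim \log n/n$, this bound is only $O(1/\log n)$. That is not good enough by itself, so more care is needed. The better choice is to use the full quadratic decay $\beta(t)\le\exp(-(2axt-a^2t^2))$ together with $x>a/2$, which gives exponent $\le -\tfrac{3}{4}a^2t$ on $[0,1/2]$, say. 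Then the contribution is
\[
 \frac53 a^2(n-1)\cdot\frac{4}{(n-2)^2}\cdot\frac{16}{9a^4}=O\bigl(1/(na^2)\bigr).
\]
On the remaining part, $\beta(t)\le\max\{\beta(1/2),\beta(1)\}$ by convexity. We have $\beta(1/2)\le 1-\tfrac{3}{8}a^2$, so that piece is exponentially small. If $a$ is small, however, all of this degenerates, and that case is handled separately next.

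\textbf{Small $a$.} Here the whole remainder carries the factor $a^2$. Using only $\beta(t)\le 1-a^2t(2x/a-t)\le \exp(-a^2 t(1-t)\cdot\text{const})$ would be weak. Instead I would bound $\beta\le \e^{-a^2 t}$ crudely for $t\le 1$ (valid since $2x/a-t>1$ when $x>a/2$... which needs care near $t=1$). This gives
\[
 \frac53a^2(n-1)\int_0^1 t\e^{-a^2(n-2)t/2}\dd t\le \min\Bigl\{\tfrac56 a^2(n-1),\ \tfrac{20}{3}\tfrac{(n-1)}{(n-2)^2a^2}\Bigr\}.
\]
Combined with the middle term $a$, this is comfortably below $1$ once $a^2 n\gg1$. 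When $a\lesssim n^{-1/2}$, the sum $a+\tfrac56a^2 n$ can still exceed $1$ only if $a^2 n\gtrsim 1$, so the two regimes overlap and together cover everything. The cost of this $\e^{-a^2t}$ bound is that it wastes the endpoint gain near $t=1$, where $\beta$ may increase again.

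\textbf{Near $t=1$.} That increasing stretch is exactly where Lemma~\ref{lem:gap} enters. On $[t_*,1]$ we have
\[
 \beta(t)\le\beta(1)\le 1-\rho\le 1-\rho_0,
\]
so this piece is at most $\frac53a^2(n-1)(1-\rho_0)^{(n-2)/2}$. With $\rho_0\gtrsim \log n/n$ this is only polynomially small, with exponent governed by $(1-a^2)^{-1}\ge1$. Ensuring the exponent exceeds $1$ is the main obstacle. Here I would use the sharper \eqref{eq:betalog}: $(n-2)/2\cdot\log\alpha/\alpha \ge (1-a^2)^{-1}\log\alpha$, giving $\beta(1)^{(n-2)/2}\lesssim\alpha^{-1/(1-a^2)}$. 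This suffices when $1-a^2$ is bounded away from $1$, i.e.\ when $a$ is not tiny. When $a$ is tiny, the prefactor $a^2$ instead absorbs the loss: $a^2(n-1)\alpha^{-1}\approx 2a^2/(1-a^2)$ is small.

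\textbf{Assembly.} Combine the three pieces (endpoint, middle, remainder), checking the two overlapping regimes in $a$. In each regime the right-hand side of \eqref{eq:directscalar} comes out below $1$, contradicting it. Explicit constants are verified at $n-1=10^6$ and extended to larger $n$ by monotonicity, as in Appendix~\ref{app:constants}.
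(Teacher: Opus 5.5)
Your framework is sound. Arguing from \eqref{eq:directscalar} with $\beta$ in place of $\beta_s$ is equivalent to the paper's route, which combines \eqref{eq:origin} and \eqref{eq:derivative} to force $|J|>1$ once $|F(1)|+|R|<1-a$. The bound $\beta(t)\le 1-axt$ on $[0,\min\{1,x/a\}]$ gives the correct $O\bigl(1/((n-1)x^2)\bigr)$ decreasing-part estimate. Your endpoint term, your treatment of the increasing stretch via $\beta(1)\le 1-\rho$, and your $x>a/2$ argument for $a$ bounded below can all be made rigorous.

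The gap is at small $a$, in three places:
\begin{itemize}
\item The inequality $\beta(t)\le \e^{-a^2t}$ is false in general. At $t=1$ it requires $x\ge a-a^3/4$, which $x>a/2$ does not give, and you never supply the ``care near $t=1$''.
\item Even granting that inequality, your two bounds $\frac56a^2(n-1)$ and $\frac{20(n-1)}{3(n-2)^2a^2}$ both exceed $1$ when $6/5\lesssim a^2n\lesssim 20/3$. So the two regimes do not overlap.
\item Your main estimate has pieces of size roughly $1/(a^2n)$ and $a^2n\,\e^{-a^2n/8}$. These are not small until $a^2n$ is in the tens.
\end{itemize}
As a result, at $n=10^6$, values of $a$ of order $10^{-3}$ to $10^{-2}$ are not covered by your argument.

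The missing step is the one you discarded. A uniform bound of about $0.05$ on the remainder is needed only for $a$ near $\sqrt{9/10}$. The actual slack is $1-\frac{(n-1)a}{n}\ge 1-a$, which is at least $1/2$ when $a\le 1/2$. In that range:
\begin{itemize}
\item By \eqref{eq:rhobounds}, $x^2\ge\rho\ge\log\Lambda/\Lambda$, where $\Lambda=\frac{n+1}{2}(1-a^2)\ge\frac{3(n-1)}{8}$ and $\log\Lambda>12$. This turns your own decreasing-part bound into $\frac{7\Lambda}{(n-1)\log\Lambda}<0.292$.
\item The increasing part is at most $\frac53a^2(n-1)\Lambda^{-\sigma/(1-a^2)}$ with $\sigma=\frac{n-2}{n+1}$. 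Maximizing $\varepsilon(n-1)^{-\varepsilon}$ over $\varepsilon=a^2$ bounds this by $\frac{5}{2\log(n-1)}<\frac5{26}$.
\item The endpoint term is below $10^{-4}$.
\end{itemize}
The total is below $1/2\le 1-a$.

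This is exactly the paper's case division. For $a\le 1/2$ it uses $x^2\ge\rho$ as above. For $a\ge 1/2$ it uses $x>a/2\ge 1/4$, which makes the decreasing part $O(1/n)$ and leaves the increasing part with exponent $\sigma/(1-a^2)\ge 4\sigma/3$. If you split at $a=1/2$ rather than by the size of $a^2n$, you need no separate small-$a$ analysis.
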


\begin{proof}
For this proof only, put
\[
 \Lambda:=\frac{n+1}{2}(1-a^2),\qquad
 \sigma:=\frac{n-2}{n+1}.
\]
Then $\Lambda>1$. By Lemmas~\ref{lem:rawerrors} and \ref{lem:gap},
\begin{equation}\label{eq:Flarge}
 |F(1)|\le\exp(-(n-1)\rho/2)
 \le\Lambda^{-(n-1)/((n+1)(1-a^2))}.
\end{equation}

Let $t_0=x/(as)$. Since every $q_j$ is nonzero, we have $s>0$. The quadratic $\beta_s(t)$ is decreasing for $t\le t_0$ and increasing for $t\ge t_0$. We therefore split the integral defining $R$ at $\min\{1,t_0\}$, and denote the two parts by $R_{\rm dec}$ and $R_{\rm inc}$; when $t_0\ge1$, the latter is absent.

On the decreasing part, $\beta_s(t)\le1-axt\le\e^{-axt}$. Hence
\begin{equation}\label{eq:Rdec}
 |R_{\rm dec}|
 \le\frac{20(n-1)}{3(n-2)^2x^2}
 <\frac7{(n-1)x^2}.
\end{equation}
On the increasing part, $\beta_s(t)\le\beta_s(1)\le1-\rho$, and therefore
\begin{equation}\label{eq:Rinc}
 |R_{\rm inc}|\le\frac53a^2(n-1)\Lambda^{-\sigma/(1-a^2)}.
\end{equation}
We shall show that $|F(1)|+|R|<1-a$.

Suppose first that $0<a\le1/2$. Then $1-a^2\ge3/4$ and $\Lambda\ge3(n-1)/8$. Since $x^2\ge\rho\ge\log\Lambda/\Lambda$, \eqref{eq:Rdec} gives
\[
 |R_{\rm dec}|<\frac{7\Lambda}{(n-1)\log\Lambda}<0.292.
\]
Write $\varepsilon=a^2$ and $\eta=3/(n+1)$, so that $\sigma=1-\eta$. Then
\[
 \left(\frac{2}{1-a^2}\right)^{\sigma/(1-a^2)}
 \le(8/3)^{4/3}<4,
 \qquad
 (n-1)^{\eta/(1-a^2)}
 \le(n-1)^{4/(n+1)}<1.001.
\]
Using $\Lambda\ge(n-1)(1-a^2)/2$ in \eqref{eq:Rinc}, we obtain
\[
 |R_{\rm inc}|
 <4\frac53(1.001)\varepsilon(n-1)^{-\varepsilon}
 \le\frac{4(5/3)(1.001)}{\e\log(n-1)}
 <\frac5{2\log(n-1)}<\frac5{26}.
\]
Here we used the fact that the maximum of $\varepsilon\e^{-\varepsilon\log(n-1)}$ is $1/(\e\log(n-1))$. Moreover, \eqref{eq:Flarge} gives $|F(1)|<10^{-4}$. Consequently,
\[
 |F(1)|+|R|<0.292+\frac5{26}+10^{-4}<\frac12\le1-a.
\]

Now suppose that $a\ge1/2$. Then $1/10\le1-a^2\le3/4$ and $x>a/2\ge1/4$, so \eqref{eq:Rdec} gives $|R_{\rm dec}|<112/(n-1)$. For $r\in[1/10,3/4]$, consider the function
\[
 g(r)=(1-r)\left(\frac{(n-1)r}{2}\right)^{-\sigma/r}.
\]
It is increasing on this interval, since
\[
 \frac{g'(r)}{g(r)}
 =-\frac1{1-r}
 +\frac{\sigma}{r^2}\left\{\log\frac{(n-1)r}{2}-1\right\}>0.
\]
Using $\Lambda\ge(n-1)(1-a^2)/2$ in \eqref{eq:Rinc} and writing $a^2=1-(1-a^2)$, we obtain
\begin{equation}\label{eq:Rincsmall}
 |R_{\rm inc}|
 \le\frac5{12}(n-1)
       \left(\frac{3(n-1)}8\right)^{-4\sigma/3}
 <0.016.
\end{equation}
For completeness,
\[
 \frac{4\sigma}{3}>\frac{33333}{25000}=\frac43-\frac1{75000},
\]
and the function $r(3r/8)^{-33333/25000}$ is decreasing for $r\ge10^6$. At $r=10^6$, the inequalities
\[
 72^3<375000,\qquad 375000^{1/75000}<1.001
\]
bound it by $1.001/27$, which proves \eqref{eq:Rincsmall}.

Similarly, with $k=(n-1)/(n+1)$, the function $((n-1)r/2)^{-k/r}$ is increasing on $[1/10,3/4]$. It follows from \eqref{eq:Flarge} that
\[
 |F(1)|\le\left(\frac{3(n-1)}8\right)^{-4k/3}
 <\frac8{3(n-1)}.
\]
Combining these estimates, we obtain
\[
 |F(1)|+|R|
 <\frac{112}{n-1}+\frac8{3(n-1)}+0.016
 <0.017<\frac1{20}<1-\sqrt{9/10}\le1-a.
\]

Thus $|F(1)|+|R|<1-a$ in both cases. In particular,
$|1-F(1)-R|>a$. Combining the first origin identity~\eqref{eq:origin} with the differential origin identity~\eqref{eq:derivative}, we obtain
\[
 |J|=\frac{n}{(n-1)a|x+iy|}|1-F(1)-R|
 >\frac{n}{(n-1)|x+iy|}>1,
\]
contradicting $|J|\le1$.
\end{proof}

\section{Finite exact verification}\label{sec:finite}

It remains to treat the degrees $6\le n\le10^6$. Although the degree range is finite, the distinguished zero $a$ still varies over the continuum $0<a<1$. The purpose of this section is to reduce this continuous parameter problem to finitely many exact rational comparisons.

The basic idea is as follows. We cover the parameter region by finitely many rectangles
\[
 n_-\le n\le n_+,\qquad a_-\le a\le a_+.
\]
On each rectangle, we first obtain a uniform lower bound for $1-\beta(1)$. Monotonicity then replaces all quantities appearing in the scalar inequalities of Proposition~\ref{prop:scalar} by explicit upper bounds depending only on the endpoints of the rectangle. The remaining integrals are bounded by convexity: on each integration subinterval, the integrand lies below its endpoint chord, so the integral is bounded by a rational linear combination of endpoint values. Consequently, excluding an entire rectangle reduces to checking one of finitely many explicit inequalities.

Thus the certificate is not a collection of sample points. Each record certifies a whole parameter rectangle, and the union of these rectangles covers the entire range $6\le n\le10^6$ and $0\le a\le1$.

\subsection{Bounds on a parameter rectangle}

Fix integers $6\le n_-\le n_+\le10^6$ and real numbers $0\le a_-<a_+\le1$. Consider the parameter rectangle
\begin{equation}\label{eq:box}
 n_-\le n\le n_+,\qquad a_-\le a\le a_+,
 \qquad 0<a<1.
\end{equation}
The first step is to obtain a lower bound for $1-\beta(1)$ that is valid uniformly throughout the rectangle. We denote such a bound by $\eta_0\in[0,1)$.

\begin{lem}[A uniform gap for $\beta(1)$]\label{lem:boxpolar}
Every configuration with $s\le1$ satisfying \eqref{eq:box} has
\[
 1-\beta(1)>\eta_0
\]
provided that either
\begin{equation}\label{eq:boxrational}
 \eta_0\le
 \frac1{1+\frac{n_+-1}{2}(1-a_-^2)}
\end{equation}
or
\begin{equation}\label{eq:boxchord}
 \left(1+(1-a_-^2)\eta_0\right)^{(n_++1)/2}
 -a_-^{n_++1}
 -\frac{n_-+1}{2}(1-a_+^2)(1+\eta_0)<0.
\end{equation}
\end{lem}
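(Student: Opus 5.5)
The lemma has two independent sufficient conditions, so I would prove it in two cases. The first rests on the rational bound in Lemma~\ref{lem:beta}. The second uses the chord inequality \eqref{eq:chordpolar} of Lemma~\ref{lem:gap} together with monotonicity in $n$ and $a$ over the rectangle \eqref{eq:box}.

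\emph{Case \eqref{eq:boxrational}.} Lemma~\ref{lem:beta} gives $1-\beta(1)>1/(1+\alpha)$, where $\alpha=\frac{n-1}{2}(1-a^2)$. On the rectangle we have $n\le n_+$ and $a\ge a_-$, so $\alpha\le\frac{n_+-1}{2}(1-a_-^2)$. Hence
\[
1-\beta(1)>\frac1{1+\alpha}\ge\frac1{1+\frac{n_+-1}{2}(1-a_-^2)}\ge\eta_0 .
\]

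\emph{Case \eqref{eq:boxchord}.} Define
\[
\Phi_{n,a}(u):=\bigl(1+(1-a^2)u\bigr)^{(n+1)/2}-a^{n+1}-\tfrac{n+1}{2}(1-a^2)(1+u).
\]
Lemma~\ref{lem:gap} gives $\Phi_{n,a}(\rho)\ge0$ and $0<\rho\le1-\beta(1)$. So it suffices to show $\rho>\eta_0$, and for this I would argue by contradiction, assuming $\rho\le\eta_0$.

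The first step is monotonicity in $u$. The function $\Phi_{n,a}$ is convex in $u$. I would instead use the integral representation from the proof of Lemma~\ref{lem:gap}:
\[
\Phi_{n,a}(u)=\tfrac{n+1}{2}(1-a^2)(1+u)\Bigl[\int_0^1\ell_u(t)^{(n-1)/2}\dd t-1\Bigr],
\qquad \ell_u(t)=a^2+(1-a^2)(1+u)t .
\]
The average $\int_0^1\ell_u^{(n-1)/2}$ is increasing in $u$. Therefore the sign condition $\Phi\ge0$ is upward closed: once $\Phi_{n,a}(u)\ge0$, it stays nonnegative for larger $u$. In particular $\Phi_{n,a}(\rho)\ge0$ with $\rho\le\eta_0$ gives $\Phi_{n,a}(\eta_0)\ge0$.

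The second step is to bound $\Phi_{n,a}(\eta_0)$ above by the left side of \eqref{eq:boxchord}, term by term:
\begin{itemize}
\item The first term is increasing in $n$ and decreasing in $a$, since $1+(1-a^2)\eta_0\ge1$. So it is at most $(1+(1-a_-^2)\eta_0)^{(n_++1)/2}$.
\item The subtracted term $a^{n+1}$ is decreasing in $n$ and increasing in $a$. So $-a^{n+1}\le -a_-^{n_++1}$.
\item The last subtracted term is increasing in $n$ and decreasing in $a$. So it is at least $\frac{n_-+1}{2}(1-a_+^2)(1+\eta_0)$.
\end{itemize}
Hence $\Phi_{n,a}(\eta_0)$ is at most the left side of \eqref{eq:boxchord}, which is negative by hypothesis. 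This contradicts $\Phi_{n,a}(\eta_0)\ge0$, so $\rho>\eta_0$ and $1-\beta(1)\ge\rho>\eta_0$.

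The main obstacle is the monotonicity-in-$u$ step; the integral representation is the cleanest route, and the strictness carries through as shown.
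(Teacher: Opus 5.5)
Your proof is correct and follows the paper's argument essentially verbatim. In the first case you bound $\alpha\le\frac{n_+-1}{2}(1-a_-^2)$ in the rational estimate of Lemma~\ref{lem:beta}; in the second you bound $\Phi_{n,a}(\eta_0)$ termwise by the left side of \eqref{eq:boxchord} and use that $\int_0^1\ell_u(t)^{(n-1)/2}\dd t$ is increasing in $u$ to compare with \eqref{eq:chordpolar}. The only difference is cosmetic: the paper argues directly that this integral is below one at $\eta_0$ but at least one at $\rho$, whereas you derive the same comparison by contradiction from $\rho\le\eta_0$.
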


\begin{proof}
Suppose first that \eqref{eq:boxrational} holds. Since
\[
 \alpha\le\frac{n_+-1}{2}(1-a_-^2),
\]
the bound \eqref{eq:betabound} gives
\[
 1-\beta(1)>\frac1{1+\alpha}
 \ge
 \frac1{1+\frac{n_+-1}{2}(1-a_-^2)}
 \ge\eta_0.
\]

Now suppose that \eqref{eq:boxchord} holds. Fix the actual values of $n$ and $a$, and set
\[
 \Phi(r)=\left(1+(1-a^2)r\right)^{(n+1)/2}
 -a^{n+1}
 -\frac{n+1}{2}(1-a^2)(1+r).
\]
The left-hand side of \eqref{eq:boxchord} is an upper bound for $\Phi(\eta_0)$ throughout the rectangle. Hence $\Phi(\eta_0)<0$. Equivalently,
\[
 \int_0^1
 \left(a^2+(1-a^2)(1+\eta_0)t\right)^{(n-1)/2}
 \dd t<1.
\]
For fixed $n$ and $a$, the integral on the left is strictly increasing in $\eta_0$. On the other hand, \eqref{eq:chordpolar} says that the same integral, with $\eta_0$ replaced by $\rho$, is at least one. Therefore $\rho>\eta_0$. Since $\rho\le1-\beta(1)$ by Lemma~\ref{lem:gap}, we conclude that
\[
 1-\beta(1)>\eta_0.
\]
\end{proof}

Fix such a value of $\eta_0$ and put
\[
 d:=\frac{a_+^2+\eta_0}{2},\qquad
 r_-:=\frac{n_--2}{2},\qquad
 b(t):=1-\eta_0t-a_-^2t(1-t).
\]
Every rectangle appearing in the certificate satisfies $d<1$. Define
\begin{equation}\label{eq:Kdef}
 K_1:=\int_0^1t b(t)^{r_-}\dd t,
 \qquad
 K_2:=\int_0^1\frac{t^2b(t)^{r_-}}{1-d\,t}\dd t.
\end{equation}
These integrals are well defined on every certified rectangle. Indeed,
\[
 b(t)=(1-t)^2+(1-\eta_0)t+(1-a_-^2)t(1-t)>0,
 \qquad 0\le t\le1,
\]
and $b(t)\le1$.

The point of $b(t)$ is that it provides a single majorant for the functions $\beta(t)$ arising from all feasible configurations in the rectangle. This allows the scalar inequalities of Proposition~\ref{prop:scalar} to be replaced by bounds depending only on the rectangle endpoints.

\begin{lem}[Uniform bounds on a parameter rectangle]\label{lem:boxbounds}
On every feasible rectangle \eqref{eq:box}, the quantities
\[
 \begin{aligned}
 \overline F&=(1-\eta_0)^{(n_--1)/2},&
 \overline R&=\frac53a_+^2(n_+-1)K_1,\\
 \overline E&=(1-\eta_0)^{n_-/2},&
 \overline C&=\frac{5a_+^3n_+(n_+-1)}6K_2
 \end{aligned}
\]
are upper bounds for, respectively,
\[
\begin{gathered}
 \beta(1)^{(n-1)/2},\qquad
 \frac53a^2(n-1)\int_0^1t\beta(t)^{(n-2)/2}\dd t,\\
 \beta(1)^{n/2},\qquad C_n(a,x).
\end{gathered}
\]
\end{lem}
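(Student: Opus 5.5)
The proof rests on one majorization: every function $\beta(t)$ coming from a feasible configuration in the rectangle \eqref{eq:box} lies below $b(t)$. Given that, each of the four bounds follows by monotonicity in $n$ and $a$. Fix a configuration with $s\le1$ and parameters $n,a$ in the rectangle. By Lemma~\ref{lem:boxpolar}, $1-\beta(1)>\eta_0$.

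\emph{The majorant.} Since $\beta$ is a quadratic in $t$ with $\beta(0)=1$ and leading coefficient $a^2$, we have the exact identity
\[
 \beta(t)=(1-t)+t\beta(1)-a^2t(1-t),
\]
which one checks by expanding. Because $\beta(1)<1-\eta_0$ and $a\ge a_-$, this gives $\beta(t)\le 1-\eta_0t-a_-^2t(1-t)=b(t)$ for $0\le t\le1$. Moreover $0\le\beta(t)\le1$: nonnegativity follows from $\beta(t)=1-x^2+(x-at)^2$ with $|x|\le1$, and $\beta(t)\le1$ on $[0,1]$ follows from convexity together with $\beta(0)=1$ and $\beta(1)<1$. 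Since also $b\le1$ and $(n-2)/2\ge r_-$, we get $\beta(t)^{(n-2)/2}\le b(t)^{r_-}$.

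\emph{The bounds $\overline F$, $\overline E$, $\overline R$.} From $0<\beta(1)<1-\eta_0<1$ and $n\ge n_-$ we obtain $\beta(1)^{(n-1)/2}\le(1-\eta_0)^{(n_--1)/2}$ and $\beta(1)^{n/2}\le(1-\eta_0)^{n_-/2}$. For the remainder quantity, use $a^2(n-1)\le a_+^2(n_+-1)$ together with the majorant to get
\[
 \int_0^1t\beta(t)^{(n-2)/2}\dd t\le K_1 .
\]

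\emph{The bound $\overline C$.} The denominator $1-axt$ in $C_n(a,x)$ is the delicate part. The available information, $2ax>a^2+\eta_0$, is a lower bound on $x$, whereas bounding the denominator from below directly would require an upper bound on $x$. So instead I will show that the whole integrand $t^2\beta(t)^{(n-2)/2}/(1-axt)$ is nonincreasing in $x$ on the admissible range, for fixed $t$, $a$, $n$. Its logarithmic $x$-derivative is
\[
 -\frac{(n-2)at}{\beta(t)}+\frac{at}{1-axt}.
\]
Since $a^2t^2\le1$, we have $\beta(t)\le 2(1-axt)$, hence $(n-2)/\beta(t)\ge (n-2)/\bigl(2(1-axt)\bigr)\ge 1/(1-axt)$ for $n\ge4$. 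So the derivative is $\le0$. Replacing $x$ by its lower bound $x_0=(a^2+\eta_0)/(2a)$ turns the integrand into
\[
 \frac{t^2\bigl(1-\eta_0t-a^2t(1-t)\bigr)^{(n-2)/2}}{1-\tfrac{a^2+\eta_0}{2}t}.
\]
Here $x_0\le x\le1$ and $ax_0<1$, because $2a x_0=a^2+\eta_0\le 2d<2$; so $x_0$ lies in the range where the monotonicity argument applies. Now bound the numerator by $b(t)^{r_-}$ as before, and the denominator below by $1-dt>0$ using $a\le a_+$. Together with $a^3n(n-1)\le a_+^3n_+(n_+-1)$, this yields $C_n(a,x)\le\overline C$.

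I expect the $x$-monotonicity step in $C_n$ to be the main point needing care. The other bounds are direct consequences of $\beta\le b$, $\beta(1)<1-\eta_0$, and the elementary monotonicities in $n$ and $a$.
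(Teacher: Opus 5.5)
Your proposal is correct and follows essentially the same route as the paper. The paper varies $\eta=1-\beta(1)=2ax-a^2$ from its actual value down to $\eta_0$. For fixed $a$ this is your monotonicity in $x$ (the derivatives differ by the factor $2a$), resting on the same inequality $\beta(t)\le 2(1-axt)$. Both proofs then replace $a^2$ by $a_-^2$ in the numerator and by $a_+^2$ in the denominator, and lower the exponent to $r_-$ using $0\le b\le 1$.
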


\begin{proof}
Fix $a$ and write $r=(n-2)/2\ge2$. Let
$\eta=1-\beta(1)$ and define
\[
 B_\eta(t):=1-\eta t-a^2t(1-t),\qquad
 D_\eta(t):=1-\frac{a^2+\eta}{2}\,t.
\]
Thus $B_\eta(t)=\beta(t)$ and $D_\eta(t)=1-axt$. As $\eta$ varies from $\eta_0$ to its actual value, both quantities remain positive. Moreover,
\[
 B_\eta(t)\le2D_\eta(t).
\]
Hence
\[
 \frac{\partial}{\partial\eta}
 \log\frac{B_\eta(t)^r}{D_\eta(t)}
 =-\frac{rt}{B_\eta(t)}+\frac{t}{2D_\eta(t)}
 \le0.
\]
It follows that replacing the actual value of $\eta$ by $\eta_0$ can only increase the ratio $B_\eta(t)^r/D_\eta(t)$. We may then replace $a^2$ by $a_-^2$ in the numerator and by $a_+^2$ in the denominator. The resulting numerator is $b(t)$. Since $0\le b(t)\le1$, replacing the exponent $r$ by the smaller value $r_-=(n_--2)/2$ enlarges the bound. We therefore obtain
\[
 \frac{\beta(t)^{(n-2)/2}}{1-axt}
 \le
 \frac{b(t)^{r_-}}{1-d\,t}.
\]
This proves the estimate for $\overline C$.

The same argument without the denominator gives
\[
 \beta(t)^{(n-2)/2}\le b(t)^{r_-},
\]
and hence the bound for $\overline R$. Finally, the endpoint bounds for $\overline F$ and $\overline E$ follow directly from
$1-\beta(1)>\eta_0$ and $n\ge n_-$.
\end{proof}

At this point every term in the two scalar inequalities of Proposition~\ref{prop:scalar} has been replaced by a quantity depending only on the rectangle. Thus, to exclude the whole rectangle, it is enough to show that one of the corresponding uniform upper bounds is strictly less than one.

\begin{prop}[Rectangle exclusion tests]\label{prop:boxtests}
A rectangle \eqref{eq:box} contains no configuration with $s\le1$ if any one of the following conditions holds:
\begin{align}
 &\overline F+\overline R+\frac{(n_+-1)a_+}{n_+}<1;
 \label{eq:testdirect}\\
 &2\overline C\le a_+,\qquad a_++\overline E<1;
 \label{eq:testmonotone}\\
 &2\overline C>a_+,\quad\overline E<1,\quad
 4(a_++\overline C)^3<27\overline C(1-\overline E)^2;
 \label{eq:testcubic}\\
 &2\overline C\le a_-,\quad
 0<\alpha_+\le\frac{n_--2}{2},\quad
 \frac{n_+-1}{2\alpha_+}
 \left(\frac{\alpha_+}{1+\alpha_+}\right)^{n_-/2}<\frac14,
 \label{eq:testboundary}
\end{align}
where
\[
 \alpha_+:=\frac{n_+-1}{2}(1-a_-^2).
\]
\end{prop}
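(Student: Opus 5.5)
Throughout, suppose a configuration with $s\le1$ has parameters $(n,a)$ in the rectangle \eqref{eq:box}. The plan is to show that each of the four tests contradicts one of the two scalar inequalities of Proposition~\ref{prop:scalar}; note $n\ge6\ge3$. The main input is Lemma~\ref{lem:boxbounds}: it bounds $\beta(1)^{(n-1)/2}$, the $R$-integral, $\beta(1)^{n/2}$ and $C_n(a,x)$ above by $\overline F,\overline R,\overline E,\overline C$. These bounds depend only on the rectangle. The remaining $n$- and $a$-dependence enters only through monotone factors.

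\emph{Test \eqref{eq:testdirect}.} The term $(n-1)a/n$ is increasing in both $n$ and $a$, so it is at most $(n_+-1)a_+/n_+$. Combining this with Lemma~\ref{lem:boxbounds}, the right-hand side of \eqref{eq:directscalar} is at most $\overline F+\overline R+(n_+-1)a_+/n_+<1$, which contradicts \eqref{eq:directscalar}.

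\emph{Tests \eqref{eq:testmonotone} and \eqref{eq:testcubic}.} Put $r=|x+iy|\in[0,1]$. The right-hand side of \eqref{eq:centerscalar} is at most $\Psi(a,C_n(a,x))+\beta(1)^{n/2}$. Since $\Psi$ is nondecreasing in each variable, this is at most $\Psi(a_+,\overline C)+\overline E$.
\begin{itemize}[leftmargin=2.3em]
\item If $2\overline C\le a_+$, then \eqref{eq:cubicmax} gives $\Psi(a_+,\overline C)=a_+$, so the bound is $a_++\overline E<1$.
\item If $2\overline C>a_+$, then $\Psi(a_+,\overline C)=2(a_++\overline C)^{3/2}/(3\sqrt{3\overline C})$. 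Because $1-\overline E>0$, both sides of $\Psi<1-\overline E$ are positive, and squaring shows it is equivalent to $4(a_++\overline C)^3<27\overline C(1-\overline E)^2$.
\end{itemize}
Either way the right-hand side of \eqref{eq:centerscalar} is $<1$, a contradiction.

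\emph{Test \eqref{eq:testboundary}.} This mirrors the endpoint analysis in Proposition~\ref{prop:near} and is the step needing genuine care. From $2C_n\le2\overline C\le a_-\le a$ we get $\Psi(a,C_n)=a$. So it suffices to show $\beta(1)^{n/2}<1-a$, and since $1-a^2\le2(1-a)$ it is enough to show $\beta(1)^{n/2}<(1-a^2)/4$.
\begin{enumerate}[leftmargin=2.3em]
\item By \eqref{eq:betabound}, $\beta(1)<\alpha/(1+\alpha)$. Write $1-a^2=2\alpha/(n-1)$. Then it suffices that
\[
h_n(\alpha):=\frac{n-1}{2\alpha}\Bigl(\frac{\alpha}{1+\alpha}\Bigr)^{n/2}<\frac14 .
\]
\item The logarithmic derivative is $-1/\alpha+n/(2\alpha(1+\alpha))$. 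This is $\ge0$ exactly when $\alpha\le(n-2)/2$.
\item We have $0<\alpha\le\alpha_+\le(n_--2)/2\le(n-2)/2$, so $h_n$ is nondecreasing on $(0,\alpha_+]$ and $h_n(\alpha)\le h_n(\alpha_+)$.
\item In $h_n(\alpha_+)$, replace $n-1$ by $n_+-1$ in the prefactor. Replace the exponent $n/2$ by $n_-/2$, which is allowed because the base $\alpha_+/(1+\alpha_+)$ is less than $1$.
\end{enumerate}
This yields exactly the third condition in \eqref{eq:testboundary}. Hence $\beta(1)^{n/2}<(1-a^2)/4\le(1-a)/2<1-a$, and \eqref{eq:centerscalar} fails.

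The main obstacle is the monotonicity of $h_n$ in $\alpha$. The hypothesis $\alpha_+\le(n_--2)/2$ is included precisely so that it holds uniformly over the rectangle. The other three tests are immediate from Lemma~\ref{lem:boxbounds} and the monotonicity of $\Psi$.
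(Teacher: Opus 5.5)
Your proposal is correct and follows the paper's proof essentially step by step: each test contradicts \eqref{eq:directscalar} or \eqref{eq:centerscalar} via Lemma~\ref{lem:boxbounds} and the monotonicity of $\Psi$. The only difference is cosmetic, in \eqref{eq:testboundary}. You prove monotonicity of $h_n$ in $\alpha$ with the true exponent $n/2$ and then coarsen to $n_\pm$, whereas the paper coarsens first and then uses that $\alpha^{n_-/2-1}/(1+\alpha)^{n_-/2}$ is increasing on $(0,(n_--2)/2]$; both orders work.
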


\begin{proof}
Suppose first that \eqref{eq:testdirect} holds. By Lemma~\ref{lem:boxbounds}, the right-hand side of \eqref{eq:directscalar} is bounded above by
\[
 \overline F+\overline R+\frac{(n_+-1)a_+}{n_+},
\]
which is strictly less than one. This contradicts \eqref{eq:directscalar}.

We next consider the centered inequality \eqref{eq:centerscalar}. By \eqref{eq:cubicmax}, its first two terms are bounded by
$\Psi(a,C_n(a,x))$. Since $\Psi$ is nondecreasing in both variables,
\[
 \Psi(a,C_n(a,x))
 \le\Psi(a_+,\overline C).
\]
If \eqref{eq:testmonotone} holds, then
$2\overline C\le a_+$, and therefore
$\Psi(a_+,\overline C)=a_+$. Hence
\[
 \Psi(a_+,\overline C)+\overline E<1.
\]
If instead \eqref{eq:testcubic} holds, then
\[
 \Psi(a_+,\overline C)
 =\frac{2(a_++\overline C)^{3/2}}
 {3\sqrt{3\overline C}},
\]
and the inequality in \eqref{eq:testcubic} is exactly the squared form of
\[
 \Psi(a_+,\overline C)<1-\overline E.
\]
Thus either condition makes the right-hand side of
\eqref{eq:centerscalar} strictly less than one, again a contradiction.

Finally, suppose that \eqref{eq:testboundary} holds. Since
$2\overline C\le a_-$, we have
\[
 C_n(a,x)\le\frac a2
\]
throughout the rectangle. From \eqref{eq:betabound},
\[
 \frac{\beta(1)^{n/2}}{1-a^2}
 \le\frac{n_+-1}{2}
 \frac{\alpha^{n_-/2-1}}{(1+\alpha)^{n_-/2}}.
\]
The function
$\alpha^{n_-/2-1}/(1+\alpha)^{n_-/2}$
is increasing for
$0<\alpha\le(n_--2)/2$. Since $\alpha\le\alpha_+$,
condition \eqref{eq:testboundary} gives
\[
 \beta(1)^{n/2}<\frac{1-a^2}{4}.
\]
Because $C_n(a,x)\le a/2$, formula \eqref{eq:cubicmax} gives
\[
 \Psi(a,C_n(a,x))=a.
\]
Consequently, \eqref{eq:centerscalar} would imply
\[
 1<a+\frac{1-a^2}{4},
\]
whereas
\[
 a+\frac{1-a^2}{4}<1
\]
for $0<a<1$. This contradiction completes the proof.
\end{proof}

\subsection{Rigorous integration}

It remains to bound the two integrals in \eqref{eq:Kdef}. No numerical quadrature approximation is needed. Instead, convexity gives an explicit upper bound on each integration subinterval in terms of the values at its two endpoints. Since the subdivision points are rational, this reduces the integral bounds to exact rational arithmetic, apart from the controlled enclosure of powers.

\begin{lem}[Convex quadrature bounds]\label{lem:quadrature}
The functions $b(t)^{r_-}$ and $b(t)^{r_-}/(1-d\,t)$ are convex on $[0,1]$. For any convex function $f$ on $[l,l+z]$, with $z>0$ and $l\ge0$,
\begin{equation}\label{eq:weights}
 \begin{aligned}
 \int_l^{l+z}tf(t)\dd t
 &\le z\left(\frac l2+\frac z6\right)f(l)
      +z\left(\frac l2+\frac z3\right)f(l+z),\\
 \int_l^{l+z}t^2f(t)\dd t
 &\le z\left(\frac{l^2}2+\frac{lz}3+\frac{z^2}{12}\right)f(l)\\
 &\quad+z\left(\frac{l^2}2+\frac{2lz}3+\frac{z^2}4\right)f(l+z).
 \end{aligned}
\end{equation}
\end{lem}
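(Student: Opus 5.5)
The lemma has two independent parts: the convexity of the two integrands, and the endpoint quadrature bounds. We treat them in turn.

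\textbf{Convexity.} Write $b(t)=1-(\eta_0+a_-^2)t+a_-^2t^2$. This is a quadratic with positive leading coefficient, so it is convex. It is also positive on $[0,1]$, as shown after \eqref{eq:Kdef}. For $r_-\ge2$ the map $u\mapsto u^{r_-}$ is convex and nondecreasing on $u>0$ (here $n_-\ge6$ gives $r_-\ge2$). The composition of a convex nondecreasing function with a convex function is convex, so $b(t)^{r_-}$ is convex.

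For the second function, $1/(1-dt)$ is positive, convex and nondecreasing on $[0,1]$ because $d<1$. A product of two positive convex functions need not be convex, so instead we take logarithms. The map $t\mapsto -\log(1-dt)$ is convex. The map $t\mapsto r_-\log b(t)$ need not be convex, so log-convexity does not follow directly, and we fall back on a second-derivative computation. Put $f=b^{r_-}$ and $g=1/(1-dt)$. Then
\[
(fg)''=f''g+2f'g'+fg''.
\]
Here $g'=dg^2$ and $g''=2d^2g^3$. With $f'=r_-b^{r_--1}b'$ and $f''=r_-b^{r_--2}\bigl((r_--1)b'^2+bb''\bigr)$, factoring out $b^{r_--2}g$ leaves the expression
\[
r_-(r_--1)b'^2+r_-bb''+2r_-bb'dg+2b^2d^2g^2 .
\]
We must show this is nonnegative. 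Set $u=b'$ and $w=bdg$. Then
\[
r_-(r_--1)u^2+2r_-uw+2w^2
\]
is a quadratic form in $(u,w)$ with discriminant $4r_-^2-8r_-(r_--1)=4r_-(2-r_-)\le0$ when $r_-\ge2$. So the form is positive semidefinite. The remaining term $r_-bb''$ is nonnegative because $b''=2a_-^2\ge0$. This proves convexity of the second function, and it is the main obstacle of the argument. It uses $r_-\ge2$ essentially, which is exactly where $n_-\ge6$ enters.

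\textbf{Quadrature bounds.} Let $f$ be convex on $[l,l+z]$ and parametrize $t=l+zu$ with $u\in[0,1]$. Convexity gives the chord bound
\[
f(t)\le(1-u)f(l)+uf(l+z).
\]
The weights $t$ and $t^2$ are nonnegative because $l\ge0$, so multiplying the chord bound by them and integrating preserves the inequality. It remains to compute the weight integrals.

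For the first moment,
\[
z\int_0^1(l+zu)(1-u)\dd u=z\Bigl(\tfrac l2+\tfrac z6\Bigr),\qquad
z\int_0^1(l+zu)u\dd u=z\Bigl(\tfrac l2+\tfrac z3\Bigr).
\]
For the second moment, expand $(l+zu)^2=l^2+2lzu+z^2u^2$ and use
\[
\int_0^1(1-u)\dd u=\tfrac12,\quad \int_0^1u(1-u)\dd u=\tfrac16,\quad \int_0^1u^2(1-u)\dd u=\tfrac1{12},
\]
which gives the coefficient $\frac{l^2}2+\frac{lz}3+\frac{z^2}{12}$ of $f(l)$. Similarly,
\[
\int_0^1u\dd u=\tfrac12,\quad \int_0^1u^2\dd u=\tfrac13,\quad \int_0^1u^3\dd u=\tfrac14,
\]
which gives the coefficient $\frac{l^2}2+\frac{2lz}3+\frac{z^2}4$ of $f(l+z)$. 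These are exactly the inequalities \eqref{eq:weights}.
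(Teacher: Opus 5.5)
Your proof is correct and follows the paper's route. The paper gets convexity of $b(t)^{r_-}/(1-d\,t)$ from the joint convexity of $(B,D)\mapsto B^q/D$ (Hessian determinant $q(q-2)B^{2q-2}/D^4\ge0$) together with monotonicity in $B$; your one-variable second-derivative computation is exactly that Hessian evaluated along $(b(t),1-d\,t)$, your discriminant condition $4r_-(2-r_-)\le0$ matches that determinant, and the quadrature weights are derived identically. (A trivial nit: the leading coefficient $a_-^2$ of $b$ can vanish, since rectangles with $a_-=0$ occur, but $b$ is then affine and still convex, and your later use of $b''=2a_-^2\ge0$ already covers this.)
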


\begin{proof}
The function $b$ is a nonnegative convex quadratic. Since $r_-\ge2$, the function $b^{r_-}$ is convex.

For $q\ge2$, consider
\[
 (B,D)\longmapsto\frac{B^q}{D},
 \qquad B\ge0,\quad D>0.
\]
This function is convex and nondecreasing in $B$. For $B>0$, convexity follows from the nonnegative diagonal entries of its Hessian and the determinant
\[
 q(q-2)B^{2q-2}/D^4\ge0;
\]
the case $B=0$ follows by continuity. Since $b(t)$ is convex and
$1-d\,t$ is positive and affine, it follows that
$b(t)^{r_-}/(1-d\,t)$ is convex on $[0,1]$.

Now let $f$ be convex on $[l,l+z]$. Its graph lies below the chord joining the two endpoint values. Thus
\[
 f(t)\le
 \frac{l+z-t}{z}f(l)
 +\frac{t-l}{z}f(l+z).
\]
Multiplying by $t$ and $t^2$, respectively, and integrating over
$[l,l+z]$ gives \eqref{eq:weights}.
\end{proof}

Thus each of $K_1$ and $K_2$ is bounded above by a finite sum involving only endpoint values of the corresponding convex integrand. Once the subdivision points are chosen on a dyadic grid, all coefficients in these sums are rational. The remaining task is therefore a finite exact verification.

\subsection{The certificate and its verification}

Let $S=2^{100}$. The JSON certificate uses the legacy field names
\begin{center}\small
\code{[m_lower,m_upper,a_lower_integer,a_upper_integer,h_integer,test,subdivision]}.
\end{center}
In the notation of this paper, these fields encode
\[
 n_-=\code{m_lower}+1,\qquad n_+=\code{m_upper}+1,
\]
while $\code{h_integer}/S$ represents the quantity $\eta_0$. The endpoints of the $a$-interval are
$a_-=\code{a_lower_integer}/S$ and
$a_+=\code{a_upper_integer}/S$. We also write
$v_0=\code{subdivision}$. Thus the code retains its original interface, whereas the paper uses the degree $n$ throughout.

For a record with upper degree $n_+$, define
\[
 K=\left\lfloor\log_2(n_+-1)\right\rfloor+4.
\]
We begin with the integer breakpoints
\[
 0,\ S\mathbin{\gg}K,\ S\mathbin{\gg}(K-1),\ldots,
 S\mathbin{\gg}1,\ S,
\]
where
$S\mathbin{\gg}k=\lfloor S/2^k\rfloor$.
On each consecutive interval $[L,U]$, we insert the points
$L+\lfloor j(U-L)/v_0\rfloor$ for
$0\le j\le v_0$, add their reflections $S-t$, sort the resulting set, and divide by $S$.

All subdivision points are therefore rational. Since the coefficients in \eqref{eq:weights} are rational functions of these points, the corresponding quadrature weights are rational as well. Thus the passage from the continuous integrals in \eqref{eq:Kdef} to the quantities checked by the certificate introduces no floating-point quadrature error.

The first checker encloses every arithmetic operation on a $100$-bit dyadic grid. The second checker computes the coordinates, weights, and rational coefficients exactly, and uses a $160$-bit dyadic grid only for the enclosure of powers. The only nonintegral exponents that occur are half-integers.

The finite verification required for the proof is summarized in the following proposition.

\begin{prop}[Finite verification]\label{prop:finite}
For every integer $6\le n\le10^6$ and every $0<a<1$, the inequality $s\le1$ is impossible.
\end{prop}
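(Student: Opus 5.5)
The proof is a covering argument combined with the rectangle exclusion tests. The certificate supplies finitely many records. Each record specifies integers $6\le n_-\le n_+\le10^6$, dyadic endpoints $0\le a_-<a_+\le1$, a dyadic value $\eta_0$, a test label, and a subdivision parameter $v_0$. I would argue in three steps.

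\emph{Step 1: coverage.} The records cover the parameter set. For every integer $n\in[6,10^6]$ and every $a\in(0,1)$, some record has $n_-\le n\le n_+$ and $a_-\le a\le a_+$. This is a purely combinatorial check on the integer fields, which the checkers perform directly. I would verify it per degree block: sort the records by $(n_-,a_-)$ and confirm that the $a$-intervals of each degree block chain from $0$ to $S$ without gaps.

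\emph{Step 2: validity of $\eta_0$.} For each record, the stored $\eta_0$ is admissible in Lemma~\ref{lem:boxpolar}. That is, either \eqref{eq:boxrational} or \eqref{eq:boxchord} holds, each verified with outward rounding on the powers $(1+(1-a_-^2)\eta_0)^{(n_++1)/2}$ and $a_-^{n_++1}$. One also checks that $d<1$. Consequently every configuration in the rectangle with $s\le1$ satisfies $1-\beta(1)>\eta_0$, and Lemma~\ref{lem:boxbounds} applies.

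\emph{Step 3: exclusion.} Upper-bound $K_1$ and $K_2$ by applying Lemma~\ref{lem:quadrature} on the dyadic subdivision described above. This gives rational weights times endpoint values of $b(t)^{r_-}$ and $b(t)^{r_-}/(1-dt)$, each enclosed from above. The result is rigorous upper bounds for $\overline F$, $\overline R$, $\overline E$ and $\overline C$. Finally, confirm that the test named in the record, one of \eqref{eq:testdirect}--\eqref{eq:testboundary}, holds with these upper bounds. Every test is monotone in these quantities: larger $\overline F,\overline R,\overline E,\overline C$ only make each inequality harder to satisfy. Note that in \eqref{eq:testcubic} the right side $27\overline C(1-\overline E)^2$ grows with $\overline C$, but the left side $4(a_++\overline C)^3$ grows faster in the relevant regime. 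Alternatively, one can reduce this case to $\Psi(a_+,\overline C)<1-\overline E$, which is monotone by \eqref{eq:cubicmax}. Therefore replacing the quantities by their enclosures keeps the argument sound. Proposition~\ref{prop:boxtests} then shows that no configuration with $s\le1$ lies in the rectangle. Since the rectangles cover the parameter set, the proposition follows.

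The main obstacle is not conceptual but the reliability of the computation. Rounding must be directed consistently: upward for every quantity that enters a test as an upper bound, and downward wherever something appears with a negative sign, such as $a_-^{n_++1}$ in \eqref{eq:boxchord}. The half-integer powers must also be enclosed correctly. To address this, I would rely on the two independent checkers (the 100-bit fully dyadic one and the exact-rational one with 160-bit power enclosures) and describe their enclosure rules, so that the proposition's correctness rests on the finite certificate together with the lemmas of this section.
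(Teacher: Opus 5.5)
Your proposal is correct and is essentially the paper's proof. The paper checks each record for the endpoint ranges, one of \eqref{eq:boxrational} or \eqref{eq:boxchord}, $d<1$, the convex quadrature bounds from Lemma~\ref{lem:quadrature}, and one of the four tests in Proposition~\ref{prop:boxtests}, and then uses gap-free coverage of $[0,1]$ in $a$ within consecutive degree blocks. Your monotonicity digression is not needed; read literally it is also slightly off, since the side condition $2\overline C>a_+$ in \eqref{eq:testcubic} moves the wrong way as $\overline C$ grows. The proof of Proposition~\ref{prop:boxtests} uses only that $\overline F,\overline R,\overline E,\overline C$ are upper bounds, so rigorous upward enclosures can be substituted directly, just as the paper treats the stored test label as an annotation and lets the second checker accept any valid test.
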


\begin{proof}
The fixed public release
\href{\releaseURL}{\texttt{v1.0}}
contains the certificate
\href{https://github.com/zhangteng2000/quadratic-tang-zhang-conjecture/blob/v1.0/supplement/certificate.json}{\code{certificate.json}}.
It consists of $6593$ records arranged in $430$ consecutive degree blocks.

For each record, both
\href{https://github.com/zhangteng2000/quadratic-tang-zhang-conjecture/blob/v1.0/supplement/verify_certificate.py}{\code{verify_certificate.py}}
and
\href{https://github.com/zhangteng2000/quadratic-tang-zhang-conjecture/blob/v1.0/supplement/verify_independent.py}{\code{verify_independent.py}}
verify the admissible ranges of the rectangle endpoints, one of the two lower-bound conditions
\eqref{eq:boxrational} and \eqref{eq:boxchord}, the positivity of $1-d$, the convex integral bounds obtained from \eqref{eq:weights}, and one of the four exclusion conditions in Proposition~\ref{prop:boxtests}. Thus each accepted record proves that an entire parameter rectangle contains no configuration with $s\le1$.

The records are distributed among the four exclusion tests as follows:
\begin{center}
\begin{tabular}{lr}
\toprule
Exclusion condition & Number of records\\
\midrule
\eqref{eq:testdirect} & 4003\\
\eqref{eq:testmonotone} & 5\\
\eqref{eq:testcubic} & 2054\\
\eqref{eq:testboundary} & 531\\
\midrule
Total & 6593\\
\bottomrule
\end{tabular}
\end{center}

Figure~\ref{fig:certificate-coverage} gives a geometric view of the
finite certificate.  Each colored region represents an entire certified
parameter rectangle, rather than a collection of sampled parameter values.
The coloring indicates which of the four exclusion tests is used to certify
the corresponding region.

\begin{figure}[tbp]
	\centering
	\includegraphics[width=0.8\textwidth]{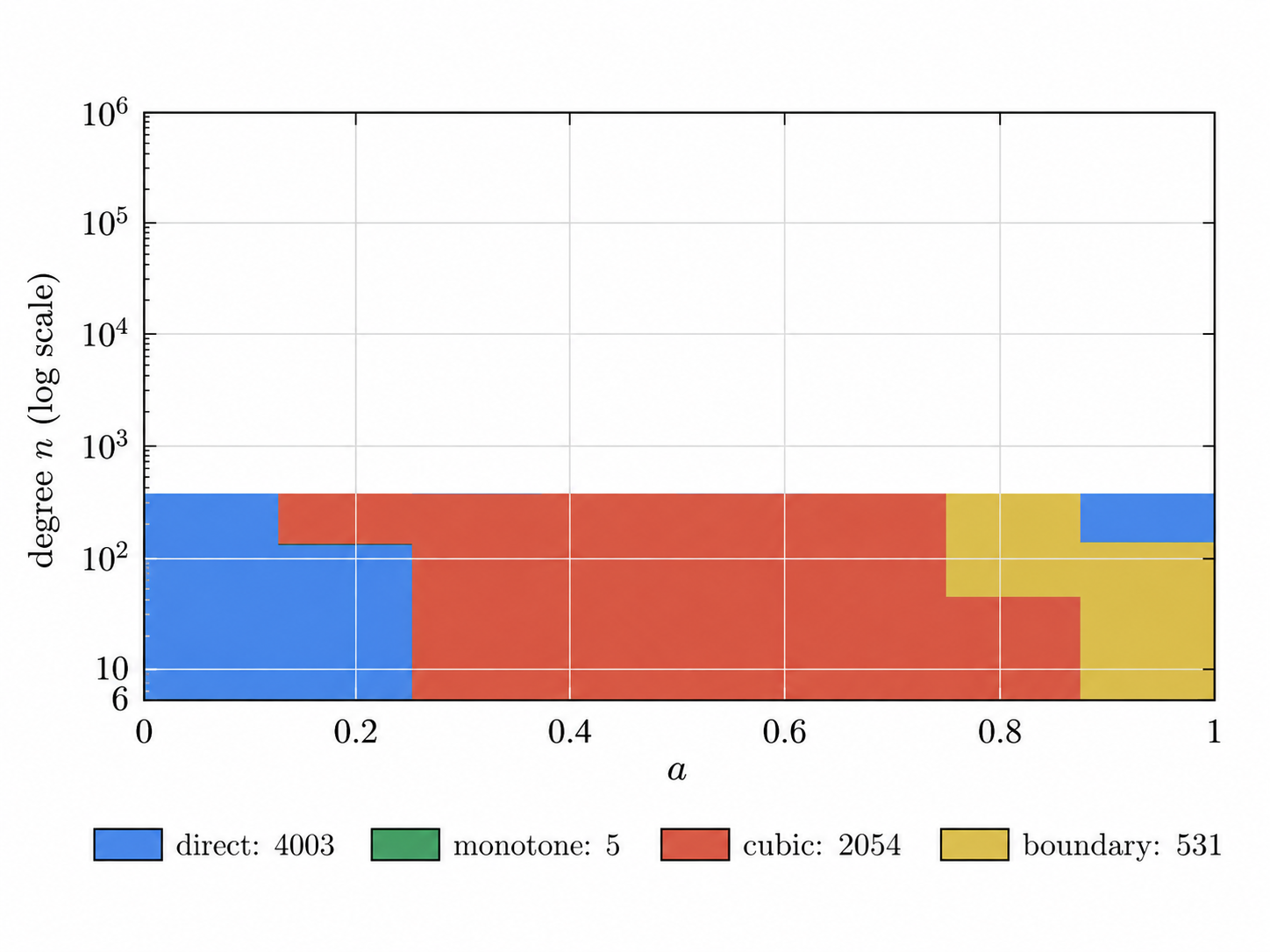}
	\caption{
		Coverage of the finite certificate in the $(a,n)$ parameter plane.
		The colors indicate the exclusion condition used for the certified
		rectangles: blue for the direct test, green for the monotonicity test,
		red for the cubic test, and gold for the boundary test.  The vertical
		degree axis is displayed on a logarithmic scale, from $n=6$ to
		$n=10^6$.  Thus the figure illustrates the coverage of the admissible
		parameter region by certified rectangles, rather than verification at
		isolated sample points.
	}
	\label{fig:certificate-coverage}
\end{figure}

Among the lower bounds for $1-\beta(1)$, $513$ records use
\eqref{eq:boxrational}, while the remaining $6080$ use
\eqref{eq:boxchord}. The stored range from
\code{m_lower}=5 to \code{m_upper}=999999 corresponds exactly to the degree range
$6\le n\le10^6$.

Within each degree block, the certified intervals in $a$
cover $[0,1]$ without gaps and have pairwise disjoint interiors. Hence every pair $(n,a)$ in the required range belongs to at least one certified parameter rectangle. Since every such rectangle is excluded by Proposition~\ref{prop:boxtests}, no configuration with $s\le1$ can occur for
$6\le n\le10^6$ and $0<a<1$.

Both verification programs return \code{PASS} on the supplied certificate. The certificate is identified by the SHA--256 digest
\begin{center}\small\ttfamily
8384d62a377b4fcac4080d0e05979d7a6fcf\\
90437b7e4911927a5403d4052709.
\end{center}
The line break and final punctuation are not part of the digest.
\end{proof}

The role of the computation is therefore limited to checking a finite family of explicit inequalities. The reduction from the original continuous parameter problem to these inequalities is entirely contained in Lemmas~\ref{lem:boxpolar}, \ref{lem:boxbounds}, and \ref{lem:quadrature}, together with Proposition~\ref{prop:boxtests}.

\section{Endpoints, small degrees, and equality}\label{sec:completion}

The preceding arguments treat every interior zero when $n\ge6$. We now dispose of the remaining small degrees and the two endpoint cases. No finite verification is needed for $2\le n\le5$. The polar argument used below is closely related to the small-degree arguments in \cite[p. 16]{Maz26} and \cite[Section~4, Remark~16]{Tao26}.

\begin{lem}[Small degrees]\label{lem:small}
The inequality $s\le1$ is impossible when $2\le n\le5$ and $0<a<1$.
\end{lem}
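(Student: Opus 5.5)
The plan is to run the polar channel alone, with no origin identity needed. Starting from the second-moment polar inequality \eqref{eq:polarraw}, I would bound the quadratic integrand pointwise by a perfect square. Since $|x|\le|x+iy|\le\sqrt s\le1$ and $s\le1$, for $0\le t\le1$ we have
\[
 a^2+2a(1-a^2)xt+(1-a^2)^2st^2\le\bigl(a+(1-a^2)t\sqrt s\bigr)^2\le\bigl(a+(1-a^2)t\bigr)^2 .
\]
Inserting this into \eqref{eq:polarraw} and integrating exactly gives
\[
 1\le\int_0^1\bigl(a+(1-a^2)t\bigr)^{n-1}\dd t=\frac{(1+a-a^2)^n-a^n}{n(1-a^2)}.
\]
Thus a counterexample with $0<a<1$ forces $P_n(a)\le0$, where
\[
 P_n(a):=n(1-a^2)-(1+a-a^2)^n+a^n .
\]
The task therefore reduces to showing $P_n(a)>0$ on $(0,1)$ for $n=2,3,4,5$.

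This reduction loses nothing at first order near $a=1$. Indeed, with $a=1-\varepsilon$, the expansion of $P_n$ cancels through order $\varepsilon^2$, because the chord/square comparison is tight exactly in the equality configuration $z^n-\omega$ at $|a|=1$. I therefore expect $P_n$ to be divisible by $(1-a)^3$. For example,
\[
 P_2(a)=1-2a+2a^3-a^4=(1-a)^3(1+a).
\]
For $n=3,4,5$ I would expand $P_n$ explicitly, divide out $(1-a)^3$, and show that the cofactor $Q_n$ is positive on $[0,1]$. Here $Q_n$ is a polynomial of degree $2n-3$. Positivity can be shown by writing $Q_n$ in the Bernstein basis on $[0,1]$ and checking that all coefficients are nonnegative and not all zero, or by an elementary term-grouping argument. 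Note that $P_n(0)=n-1>0$, which is consistent.

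The main obstacle is this last verification: confirming that $(1-a)^3$ really divides $P_n$ for every $n\le5$, and that the quotient has no root in $(0,1)$. The cancellation to second order at $a=1$ follows from the expansion sketched above, since both sides equal $2n\varepsilon-n\varepsilon^2$. Positivity of the cofactor, however, is a genuine finite polynomial check for each $n$.

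If the perfect-square majorant turns out to be too lossy for some $n$, I would fall back on the sharper chord bound \eqref{eq:chordpolar}. In that route I would use $\rho\le 2a\sqrt s-a^2-(1-a^2)(1-s)\le 2a-a^2$, together with the monotonicity of the left side of \eqref{eq:chordpolar} in $\rho$. This again yields a one-variable polynomial inequality in $a$, to be handled by the same factor-and-check method.
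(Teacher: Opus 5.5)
Your proposal is correct. Its core step is the paper's: bound the polar integrand by $(a+(1-a^2)t)^{n-1}$, so that a counterexample forces $P_n(a)\le 0$. The difference is only in how you finish, and the finishing step you flag as the main obstacle is in fact routine.

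\textbf{How the paper finishes.} It treats only the extremal exponent. It proves
\[
1-\int_0^1\bigl(a+(1-a^2)t\bigr)^4\dd t=\tfrac{(1-a)^3(1+a)}{5}\bigl(a^4-3a^3+3a+4\bigr)>0,
\]
which is equivalent to $P_5=(1-a)^4(1+a)^2(a^4-3a^3+3a+4)$. It then gets $n=2,3,4$ at once from monotonicity of power means on $[0,1]$:
\[
\int_0^1 f^{n-1}\le\Bigl(\int_0^1 f^4\Bigr)^{(n-1)/4}.
\]
So it needs one polynomial identity where you need four.

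\textbf{Your case-by-case route.} It goes through as planned, with these factorizations:
\begin{itemize}
\item $P_3=(1-a)^3(1+a)^2(2-a)$;
\item $P_4=(1-a)^3(1+a)(a^4-2a^3-2a^2+2a+3)$;
\item $P_5$ as above.
\end{itemize}
One correction: for $n=5$ the cofactor $Q_5=(1-a)(1+a)^2(a^4-3a^3+3a+4)$ vanishes at $a=1$. So ``$Q_n>0$ on $[0,1]$'' is false there. You can only claim positivity on $[0,1)$, which is all you need. Your Bernstein test, with nonnegative coefficients not all zero, still works for this.

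\textbf{Why $n=5$ is special.} Put $a=1-\varepsilon$. Then
\[
1-\int_0^1\bigl(a+(1-a^2)t\bigr)^{m}\dd t=\tfrac{m(4-m)}{6}\,\varepsilon^2+O(\varepsilon^3).
\]
Hence $m=n-1=4$ is exactly the borderline exponent. This is why the majorant works precisely for $n\le 5$, and why $P_5$ has the extra factor of $1-a$. It is also why reducing everything to the fourth power, as the paper does, is the natural economy.

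The fallback through \eqref{eq:chordpolar} is never needed.
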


\begin{proof}
Since $x\le1$ and $s\le1$, the integrand in \eqref{eq:polarraw} is bounded above by
\[
 \bigl(a+(1-a^2)t\bigr)^{n-1}.
\]
For the fourth power, direct integration and expansion give
\[
 1-\int_0^1\bigl(a+(1-a^2)t\bigr)^4\dd t
 =\frac{(1-a)^3(1+a)}5\bigl(a^4-3a^3+3a+4\bigr)>0.
\]
Indeed, the final factor may be written as $a^4+3a(1-a^2)+4>0$. Since $n-1\le4$, monotonicity of the integral power means on the probability space $[0,1]$ gives
\[
 \int_0^1\bigl(a+(1-a^2)t\bigr)^{n-1}\dd t
 \le\left(\int_0^1\bigl(a+(1-a^2)t\bigr)^4\dd t\right)^{(n-1)/4}<1.
\]
This contradicts \eqref{eq:polarraw}.
\end{proof}

At $a=1$ we shall use the following boundary inequality,
obtained by setting $k=\nu=1$ in
\cite[p.~460, Eq.~(2.3)]{MS69}.

\begin{lem}[Meir--Sharma boundary inequality]\label{lem:MSboundary}
Suppose that all zeros of $p$ lie in the closed unit disk, that $p(1)=0$, and that $1$ is a simple zero. If $\zeta_1,\ldots,\zeta_{n-1}$ are the critical points of $p$, counted with multiplicity, then
\begin{equation*}
 \frac1{n-1}\sum_{j=1}^{n-1}
 \Ree\frac1{1-\zeta_j}\ge1.
\end{equation*}
\end{lem}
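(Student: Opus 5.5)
The plan is to compute the sum $\sum_j (1-\zeta_j)^{-1}$ exactly as a logarithmic derivative at $z=1$, and then bound each resulting term by an elementary half-plane estimate. No earlier machinery from the paper is needed.

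First we record that the sum is finite. Write $p(z)=c(z-1)g(z)$ with $g(z)=\prod_{j=1}^{n-1}(z-z_j)$ and $|z_j|\le1$. Since $1$ is a simple zero, $z_j\ne1$ for all $j$, so $g(1)\ne0$. Moreover $p'(1)=c\,g(1)\ne0$, hence no critical point equals $1$.

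Next we express the critical-point sum through $p''(1)/p'(1)$. Since $p'(z)=nc\prod_{j=1}^{n-1}(z-\zeta_j)$, logarithmic differentiation at $z=1$ gives
\[
 \frac{p''(1)}{p'(1)}=\sum_{j=1}^{n-1}\frac1{1-\zeta_j}.
\]
On the other hand, differentiating $p=c(z-1)g$ twice gives $p'=c\,g+c(z-1)g'$ and $p''=2c\,g'+c(z-1)g''$. Evaluating at $z=1$ yields $p'(1)=c\,g(1)$ and $p''(1)=2c\,g'(1)$. Therefore
\[
 \sum_{j=1}^{n-1}\frac1{1-\zeta_j}=\frac{2g'(1)}{g(1)}=2\sum_{j=1}^{n-1}\frac1{1-z_j}.
\]
This converts the statement about critical points into one about the zeros $z_j$.

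It then remains to show that $\Ree\frac1{1-z}\ge\frac12$ whenever $|z|\le1$ and $z\ne1$. This holds because
\[
 \Ree\frac1{1-z}-\frac12=\frac{2\Ree(1-\bar z)-|1-z|^2}{2|1-z|^2}=\frac{1-|z|^2}{2|1-z|^2}\ge0.
\]
Summing over $j$ and taking real parts gives $\sum_j\Ree\frac1{1-\zeta_j}\ge 2\cdot\frac{n-1}{2}=n-1$, and dividing by $n-1$ gives the claim.

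There is no real obstacle here. The only step needing care is the second-derivative identity $p''(1)=2c\,g'(1)$, together with checking the nonvanishing of $g(1)$ and $p'(1)$ so that every term is finite. The half-plane estimate is the standard fact that $z\mapsto 1/(1-z)$ maps the closed unit disk minus $\{1\}$ into $\{\Ree w\ge\tfrac12\}$.
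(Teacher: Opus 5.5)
Your proof is correct and complete. From $p=c(z-1)g$ you get $p'(1)=c\,g(1)\neq0$ and $p''(1)=2c\,g'(1)$, which gives the exact identity
\[
 \sum_{j=1}^{n-1}\frac{1}{1-\zeta_j}=\frac{p''(1)}{p'(1)}=2\sum_{j=1}^{n-1}\frac{1}{1-z_j}.
\]
The bound $\Ree\frac{1}{1-z}-\frac12=\frac{1-|z|^2}{2|1-z|^2}\ge0$ on the punctured closed disk then finishes it.

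The paper takes a different route: it gives no argument of its own. It quotes the lemma as the special case $k=\nu=1$ of a more general two-parameter boundary inequality of Meir and Sharma (their Eq.~(2.3)).

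The citation is shorter and places the lemma inside that general framework. Your route makes this step self-contained and elementary, using only the logarithmic derivative at the simple zero and a M\"obius-map estimate.

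Your identity also gives the exact value of the average, $\frac1{n-1}\sum_j\Ree\frac1{1-\zeta_j}=\frac2{n-1}\sum_j\Ree\frac1{1-z_j}$. So equality in the lemma holds exactly when all the remaining zeros $z_j$ lie on the unit circle. The paper does not need this, because in Lemma~\ref{lem:endpoints} it obtains the equality case $p=c(z^n-1)$ from the equality case of Cauchy--Schwarz instead.
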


We can now treat the two endpoints.

\begin{lem}[Endpoint cases]\label{lem:endpoints}
Suppose that the prescribed zero is not critical. At $a=0$,
\[
 s\ge n^{2/(n-1)}>1.
\]
At $a=1$, one has $s\ge1$, with equality if and only if $p$ is a nonzero constant multiple of $z^n-1$.
\end{lem}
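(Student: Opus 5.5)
The case $a=0$ follows from Vieta's formula for $p'(0)$ together with AM--GM. The case $a=1$ follows from Lemma~\ref{lem:MSboundary} together with Cauchy--Schwarz, and the equality case comes from tracking when each inequality is tight. Throughout, I normalize $p$ to be monic. This does not change zeros or critical points, and then $p'(z)=n\prod_{j=1}^{n-1}(z-\zeta_j)$.

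\emph{The case $a=0$.} Write $p(z)=z\,g(z)$ with $g(z)=\prod_{j=1}^{n-1}(z-z_j)$. Since $0$ is not critical, no $\zeta_j$ vanishes, and $q_j=-1/\zeta_j$. Differentiating gives $p'(0)=g(0)=\prod_j(-z_j)$. Comparing with the factored form of $p'$ yields
\[
n\prod_{j=1}^{n-1}|\zeta_j|=\prod_{j=1}^{n-1}|z_j|\le 1,
\]
hence $\prod_j|q_j|\ge n$. The arithmetic--geometric mean inequality applied to the numbers $|q_j|^2$ then gives
\[
s\ge\Bigl(\prod_j|q_j|^2\Bigr)^{1/(n-1)}\ge n^{2/(n-1)}>1 .
\]

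\emph{The case $a=1$: the inequality.} Here $q_j=1/(1-\zeta_j)$. Lemma~\ref{lem:MSboundary} is exactly the statement $x=\frac1{n-1}\sum_j\Ree q_j\ge1$. By Cauchy--Schwarz,
\[
s\ge|x+iy|^2\ge x^2\ge1 .
\]

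\emph{The case $a=1$: equality.} Suppose $s=1$. Then every inequality in the chain above is an equality. Equality in $s\ge|x+iy|^2$ means the variance $v$ vanishes, so all $q_j$ are equal. Equality in $|x+iy|^2\ge x^2$ forces $y=0$. Finally $x=1$, so $q_j=1$ for every $j$. Hence $\zeta_j=0$ for all $j$, and $p'(z)=nz^{n-1}$. Integrating gives $p(z)=z^n+c_0$, and $p(1)=0$ forces $c_0=-1$, so $p(z)=z^n-1$.

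Conversely, for $z^n-1$ all zeros lie on the unit circle, $1$ is a simple zero, and all critical points are $0$. Thus every $|1-\zeta_j|=1$ and $s=1$.

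I do not expect a real obstacle here, since each step is a one-line consequence. The points needing care are the normalization of leading coefficients in the $a=0$ Vieta comparison, and making sure the equality analysis uses all three tight inequalities rather than only Meir--Sharma.
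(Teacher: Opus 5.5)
Your proposal is correct and follows the paper's proof essentially step for step: Vieta's formula for $p'(0)$ plus AM--GM when $a=0$, and Lemma~\ref{lem:MSboundary} plus Cauchy--Schwarz when $a=1$. The only cosmetic difference is in the equality case. The paper reads off $\frac1{n-1}\sum_j|q_j-1|^2=s-2x+1=0$ directly, while you split this into $v=0$, $y=0$, and $x=1$; the two are equivalent.
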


\begin{proof}
Suppose first that $a=0$. Comparison of constant terms gives
\[
 \frac{n}{\prod_{j=1}^{n-1}q_j}
 =p'(0)=\prod_{j=1}^{n-1}(-z_j).
\]
Since $|z_j|\le1$,
\[
 \prod_{j=1}^{n-1}|q_j|\ge n.
\]
The arithmetic--geometric mean inequality therefore gives
\[
 s\ge
 \left(\prod_{j=1}^{n-1}|q_j|\right)^{2/(n-1)}
 \ge n^{2/(n-1)}>1.
\]

Now suppose that $a=1$. Put $q_j=(1-\zeta_j)^{-1}$. By Lemma~\ref{lem:MSboundary},
\[
 1\le
 \frac1{n-1}\sum_{j=1}^{n-1}\Ree q_j.
\]
By Cauchy--Schwarz,
\[
 \frac1{n-1}\sum_{j=1}^{n-1}\Ree q_j
 \le
 \left(\frac1{n-1}\sum_{j=1}^{n-1}|q_j|^2\right)^{1/2}
 =\sqrt{s}.
\]
Hence $s\ge1$.

Suppose now that $s=1$. The preceding inequalities must then be equalities, and hence
\[
 \frac1{n-1}\sum_{j=1}^{n-1}\Ree q_j=1.
\]
Consequently,
\[
 \frac1{n-1}\sum_{j=1}^{n-1}|q_j-1|^2
 =s-2\frac1{n-1}\sum_{j=1}^{n-1}\Ree q_j+1=0.
\]
Thus $q_j=1$ for every $j$, and therefore $\zeta_j=0$ for every $j$. It follows that $p'$ is a nonzero constant multiple of $z^{n-1}$. Since $p(1)=0$, the polynomial $p$ is a nonzero constant multiple of $z^n-1$. Conversely, for such a polynomial every critical point is zero, and hence $s=1$.
\end{proof}

We are now ready to prove the main theorem.

\begin{proof}[Proof of Theorem~\ref{thm:main}]
If the prescribed zero is also a critical point, then the sum in \eqref{eq:main} is infinite, so the inequality is strict. We may therefore assume that the prescribed zero is simple and pperform the normalization of Section~\ref{sec:notation}.

For $0<a<1$, Lemma~\ref{lem:small}, Proposition~\ref{prop:finite}, and Propositions~\ref{prop:near}--\ref{prop:away} exclude $s\le1$ in every degree. Hence $s>1$, and therefore
\[
 \sum_{j=1}^{n-1}\frac1{|a-\zeta_j|^2}>n-1
\]
for every interior zero.

At $a=0$, Lemma~\ref{lem:endpoints} again gives strict inequality. At $a=1$, the same lemma gives $s\ge1$, with equality precisely when $p$ is a nonzero constant multiple of $z^n-1$. Undoing the rotation, the equality family becomes
\[
 p(z)=A(z^n-\omega),
 \qquad A\ne0,\qquad |\omega|=1.
\]
Conversely, every critical point of $A(z^n-\omega)$ is zero, while every zero of the polynomial has modulus one. Hence equality holds for every zero of every polynomial in this family.
\end{proof}

For completeness, we conclude with the proof of Corollary~\ref{cor:powers}.

\begin{proof}[Proof of Corollary~\ref{cor:powers}]
When all distances are nonzero, put $t_j=|a-\zeta_j|^{-1}$. For $\lambda\ge2$, monotonicity of power means and \eqref{eq:main} give
\[
 \left(\frac1{n-1}\sum_jt_j^\lambda\right)^{1/\lambda}
 \ge\left(\frac1{n-1}\sum_jt_j^2\right)^{1/2}\ge1.
\]
Thus
\[
 \sum_{j=1}^{n-1}|a-\zeta_j|^{-\lambda}\ge n-1.
\]
If equality holds, then both power means above are equal to one. In particular, the quadratic mean is one, so Theorem~\ref{thm:main} shows that
$p(z)=A(z^n-\omega)$ with $A\ne0$ and $|\omega|=1$. Conversely, this family gives equality for every $\lambda\ge2$. If one of the distances vanishes, the corresponding sum is infinite and equality cannot occur.
\end{proof}

\appendix
\section{Constants and reproducibility}\label{app:audit}

\subsection{Analytic comparisons}\label{app:constants}

This appendix records the numerical comparisons used in Section~\ref{sec:large}. They are also verified using exact rational arithmetic in \code{analytic_audit.py}. In particular, the large-degree argument does not rely on extrapolation from a finite collection of numerical checks.

The exponential series, with its tail bounded by a geometric series, gives
\[
\frac{27}{10}<\sum_{j=0}^5\frac1{j!}<\e
<\sum_{j=0}^5\frac1{j!}+\frac7{4320}<\frac{11}{4}.
\]
Hence
\[
13<\log10^6<14,\qquad
\log(3\cdot10^6/8)>12,\qquad
\log(10^6/20)>4,
\]
and $\sqrt\e<5/3$. For $n-1\ge10^6$,
\[
\frac{\sqrt{n-1}}6>4\log(n-1),\qquad
\frac{9(n-1)}{160}>4\log(n-1),
\]
and both inequalities become stronger as $n$ increases.

For the constants arising in the integral estimates, the functions
\[
r\longmapsto\frac{r^2(r+1)}{(r-1)^3},
\qquad
r\longmapsto\left(\frac r{r-1}\right)^2
\]
are decreasing for $r>1$. It therefore suffices to check them at $r=10^6$, where
\[
\frac{2048(5/3)}{81}\left(\frac{10}{9}\right)^3
\frac{(10^6)^2(10^6+1)}{(10^6-1)^3}<60,
\qquad
4\frac53\left(\frac{10^6}{10^6-1}\right)^2<7.
\]
The remaining comparisons in the near-boundary case reduce to
\[
10\left(1-\frac3{10^6}\right)>9,
\qquad
\frac{60}{10^6}+\frac7{10^{12}}<\frac{61}{10^6}<\frac9{20},
\]
together with $(n-1)2^{-n/2}<1/4$ for $n-1\ge16$.

The function $\log(n-1)/(n+1)$ is decreasing throughout the range under consideration. Using $\e^t\le(1-t)^{-1}$ for $0\le t<1$, we obtain
\[
(n-1)^{4/(n+1)}
<\frac1{1-56/(10^6+2)}<\frac{1001}{1000}.
\]
The elementary estimates needed in the case $0<a\le1/2$ are
\[
(8/3)^4<4^3,\qquad
\frac{4(5/3)(1001/1000)}{27/10}<\frac52,
\qquad
\frac{7(10^6+2)}{24\cdot10^6}<\frac{292}{1000},
\]
and
\[
\frac{292}{1000}+\frac5{26}+\frac1{10000}<\frac12.
\]

For the case $a\ge1/2$, we have $\sigma=(n-2)/(n+1)>99999/100000$ and $72^3<375000$. Moreover,
\[
375000^{1/75000}
<\frac1{1-14/75000}<\frac{1001}{1000},
\qquad
\frac5{12}\frac1{27}\frac{1001}{1000}<\frac{16}{1000}.
\]
Finally,
\[
\frac{112}{10^6}+\frac8{3\cdot10^6}+\frac{16}{1000}
<\frac{17}{1000}<\frac1{20},
\qquad (19/20)^2>9/10.
\]
All terminating decimals appearing in the proof are understood as exact rational numbers.

\subsection{Certificate format and directed rounding}\label{app:format}

The JSON certificate contains the fields \code{precision_bits}, \code{m_start},
\code{m_stop_exclusive}, and \code{records}, together with auxiliary generation statistics. The names involving \code{m} are retained from the released code, where \code{m} denotes $n-1$. The first three values are $100$, $5$, and $1000000$, respectively. Each record is a seven-entry array
\begin{center}\small
\code{[m_lower,m_upper,a_lower_integer,a_upper_integer,h_integer,test,subdivision]}.
\end{center}

The legacy field \code{h_integer} stores the lower bound $\eta_0$ for $1-\beta(1)$. The three coordinate integers are stored as decimal strings and interpreted after division by $2^{100}$. Every record uses \code{subdivision = 2}. The field \code{test} is only an annotation recording the exclusion test used during certificate generation; the second checker independently verifies a valid test rather than relying on this annotation. The additional condition $\eta_0>2a_+-a_+^2$ would also exclude a rectangle, but it is not needed for any record in the supplied certificate.

For an interval $[L/S,U/S]$ and another nonnegative interval $[L'/S,U'/S]$, multiplication is enclosed by
\[
\left[\frac{\lfloor LL'/S\rfloor}{S},
\frac{\lceil UU'/S\rceil}{S}\right].
\]
For a quotient with positive denominator interval, the enclosure is
\[
\left[\frac{\lfloor LS/U'\rfloor}{S},
\frac{\lceil US/L'\rceil}{S}\right].
\]
Square roots are bounded using integer square-root enclosures. Signed additions and subtractions use the corresponding endpoint order. Integer powers are computed by repeated squaring, while a half-integer power is evaluated as the product of an integer power and a square root.

The first checker applies these directed-rounding rules to every arithmetic operation. The second checker uses exact rational arithmetic except for the enclosure of powers, where it works on a $160$-bit dyadic grid. In the implementation, interval bounds for $b$ are replaced by their nonnegative parts; this is valid because $b\ge0$, as proved before Lemma~\ref{lem:boxbounds}.

\subsection{Reproduction and scope of verification}\label{app:reproduce}

The certificate and Python reproducibility package are available in the public
\href{\repoURL}{GitHub repository}; the version used for this manuscript is the fixed
\href{\releaseURL}{release \texttt{v1.0}}. The files needed to reproduce the arithmetic verification are:
\begin{itemize}[leftmargin=2em,itemsep=0.15em,topsep=0.3em]
\item \href{https://github.com/zhangteng2000/quadratic-tang-zhang-conjecture/blob/v1.0/supplement/certificate.json}{\code{certificate.json}} --- finite interval certificate;
\item \href{https://github.com/zhangteng2000/quadratic-tang-zhang-conjecture/blob/v1.0/supplement/certify.py}{\code{certify.py}} --- certificate generator;
\item \href{https://github.com/zhangteng2000/quadratic-tang-zhang-conjecture/blob/v1.0/supplement/verify_certificate.py}{\code{verify_certificate.py}} --- first checker;
\item \href{https://github.com/zhangteng2000/quadratic-tang-zhang-conjecture/blob/v1.0/supplement/verify_independent.py}{\code{verify_independent.py}} --- separately implemented checker;
\item \href{https://github.com/zhangteng2000/quadratic-tang-zhang-conjecture/blob/v1.0/supplement/analytic_audit.py}{\code{analytic_audit.py}} --- exact audit of the analytic constants.
\end{itemize}

From the supplementary directory, the following commands require only Python's standard library:
\begin{verbatim}
python verify_certificate.py certificate.json
python verify_independent.py certificate.json
python analytic_audit.py
\end{verbatim}
The first two commands verify Proposition~\ref{prop:finite}; the third checks the numerical comparisons recorded in Appendix~\ref{app:constants}. The certificate can be regenerated by running
\begin{verbatim}
python certify.py --stop 1000000 --out regenerated_certificate.json
python verify_independent.py regenerated_certificate.json
\end{verbatim}
Floating-point arithmetic in the generator is used only to propose search brackets and report progress; it never determines whether a rectangle is accepted.

The first checker uses the interval-arithmetic and exclusion-test routines in \code{certify.py}.
The second checker does not import \code{certify.py}
or \code{verify_certificate.py}. The two verification programs are separate implementations of the same mathematical specification. Their agreement provides a cross-check of the arithmetic specification. The completed Lean~4 formalization~\cite{Zha26b} also proves the analytic reductions, verifier soundness, rectangle coverage, acceptance of all $6593$ certificate records, and the final theorem, including the equality classification and Corollary~\ref{cor:powers}.

\section*{Data availability}
The finite certificate, generator, two checkers, analytic constant checker, and verification logs are publicly available in the
\href{\repoURL}{GitHub repository} and archived in the fixed
\href{\releaseURL}{release \texttt{v1.0}}. The Lean~4 source and verification records are available in the separate formalization repository~\cite{Zha26b}. The certificate is an essential component of the computer-assisted proof. No empirical data are used.

\end{document}